\def\R{\mathbb{R}}
\def\Rd{\mathbb{R}^d}
\def\E{\mathcal{E}}
\def\F{\mathcal{F}}
\def\G{\mathcal{G}}
\def\H{\mathcal{H}}
\def\dist{\mathrm{dist}}
\renewcommand{\epsilon}{\varepsilon}
\newcommand{\eps}{\epsilon}
\renewcommand{\theta}{\vartheta}
\renewcommand{\phi}{\varphi}
\newtheorem{thm}{Theorem}
\newtheorem{prop}{Proposition}
\newtheorem{cor}{Corollary}
\newtheorem{lemma}{Lemma}
\newtheorem{rmk}{Remark}
\newcommand{\comment}[1]{}
\begin{document}
\title{On the  convergence rate of~some~nonlocal~energies}

\author[A. Chambolle]{Antonin Chambolle}
\author[M. Novaga]{Matteo Novaga}
\author[V. Pagliari]{Valerio Pagliari}
\address[A. Chambolle]{
CMAP, \'Ecole Polytechnique,
91128 Palaiseau Cedex,
France.
Email: \href{mailto:antonin.chambolle@cmap.polytechnique.fr }{\tt antonin.chambolle@cmap.polytechnique.fr}.
}
\address[M. Novaga]{
	Dipartimento di Matematica,
	Universit\`a di Pisa, 
	Largo B. Pontecorvo 5, 56127 Pisa, Italy.
	Email: \href{mailto:matteo.novaga@unipi.it}{\tt matteo.novaga@unipi.it}.
}
\address[V. Pagliari]{
	Institute of Analysis and Scientific Computing,
	TU Wien,
	Wiedner Hauptstra{\ss}e 8 - 10, 1040 Vienna, Austria.
	Email: \href{mailto:valerio.pagliari@tuwien.ac.at}{\tt valerio.pagliari@tuwien.ac.at}
}

\begin{abstract}
We study the rate of convergence of some nonlocal functionals
recently considered by Bourgain, Brezis, and Mironescu, 
and, after a suitable rescaling,
we establish the $\Gamma$-convergence 
of the corresponding rate functionals
to a limit functional of second order.
\end{abstract}

\maketitle

\tableofcontents

\section{Introduction}
We are interested in the rate of converge, as $h\searrow 0$, of the nonlocal functionals
\[
\F_h(u)\coloneqq \int_{\Rd}\int_{\Rd}K_h(y-x) f \left(\frac{\left| u(y)-u(x) \right|}{\left| y - x \right|}\right) dydx,
\]
to the limit functional 
	\[
		\F_0(u)\coloneqq \int_{\Rd}\int_{\Rd} K(z) f( \left| \nabla u(x)\cdot\hat{z} \right|) dz dx.
	\]
Here and in the sequel,
we set $K_h(z)\coloneqq h^{-d}K(z/h)$ with
$K\colon \R^d\to [0,+\infty)$ an even function in $L^1(\Rd)$
that has finite second moment, see \eqref{eq:K} below.
We let $f\colon [0,+\infty) \to [0,+\infty)$ be a convex function of class $C^2$
satisfying $f(0)=f'(0)=0$, and,
for $z\in\Rd \setminus\set{0}$, we put $\hat z=z/\left| z \right|$.

Functionals similar to $\F_h$ and $\F_0$
were considered by Bourgain, Brezis, and Mironescu in \cite{BBM}.
For $K$ radial and $f(t) = \left| t \right|^p$ with $p\geq1$, they established convergence as $h\searrow 0$
to a multiple of $\left\Vert{u}\right\Vert^p_{W^{1,p}(\Omega)}$ whenever $u\in W^{1,p}(\Omega)$ with $\Omega$ a smooth, bounded domain in $\Rd$.
Their result has been extended in several directions
(\cites{MaS,Da,AK,LS}, see also \cites{GM,ADM,BP}),
and, among others, we would like to spend some words
on the contributions by Ponce \cite{P}.
The author studied the case in which
$\set{K_h}$ is a suitable family of functions in $L^1(\Rd)$ that approaches the Dirac delta in $0$
and $f\colon [0,+\infty) \to [0,+\infty)$ is a generic convex function.
When $u\in L^p(\Omega)$ for some $p\geq 1$
and the boundary of $\Omega$ is compact and Lipschitz,
he showed pointwise convergence of some functionals that generalize the ones in \cite{BBM}.
The limit is a first order functional,
which is given by a variant of $\F_0$
if $K_h(z) = h^{-d}K(z/h)$ for some $K\in L^1(\Rd)$
and $u\in W^{1,p}(\Omega)$.
Further, when $\Omega$ is also bounded,
in \cite{P} $\Gamma$-convergence to the pointwise limit
with respect to the $L^1$-topology is proved too.
For the definition and the properties of $\Gamma$-convergence,
we refer to the monographs~\cites{Braides,DalMaso}.

Let now 
\begin{equation}\label{rate}
\begin{split}
\E_h (u) & \coloneqq \frac{\F_0(u)-\F_h(u)}{h^2} \\
			& = \frac{1}{h^2} \int_{\Rd}\int_{\Rd} \left[
 						K(z) f( \left|\nabla u(x)\cdot\hat{z}\right| ) - K_h(z)
 						f \left(\frac{\left| u(x+z)-u(x) \right|}{\left| z\right|}\right)
 						\right]dzdx
\end{split}
\end{equation}
be the functional which measures the rate of convergence of $\F_h$ to $\F_0$.
In this paper, under the assumptions
that the function $f$ is strongly convex
(see condition \eqref{eq:f-conv} below)
and that we restrict to functions
that vanish outside a bounded, Lipschitz set $\Omega$,
we prove that the family $\set{\E_h}$
$\Gamma$-converges, with respect to the $H^1(\Rd)$-topology, 
to the {\it second order} limit functional
\begin{equation*}
\E_0(u) \coloneqq \begin{cases}
\displaystyle{\frac{1}{24}\int_{\Rd} \int_{\Rd}
K(z) \left| z \right|^2	f''( \left| \nabla u(x) \cdot \hat z \right| ) 
\left| \nabla^2 u(x)\hat{z}\cdot \hat{z} \right|^2 dz dx} & \text{if } u \in H^2(\Rd), 
\\
+\infty & \text{otherwise}.
\end{cases}
\end{equation*}
 The uniform convexity assumption on $f$,
 which is needed for the $\Gamma$-inferior limit inequality, 
 excludes some interesting cases
 such as $f(x)=\left|x\right|^p$ with $p\ge 1$, $p\ne 2$.
 In particular, when $f(x)=\left|x\right|$ and $K$ is radially symmetric, 
the analysis is related to a geometric problem considered in \cite{MS} 
 in the context of a physical model for liquid drops with dipolar repulsion.
 We also observe that our study differs
 from a higher order $\Gamma$-limit of $\F_h$ (see~\cite{BT08}),
 which would rather correspond 
 to deal with the $\Gamma$-limit of the functionals
 \[
 \frac{\F_h-\min\F_0}{h^\alpha}\qquad \text{for some }\alpha>0\,.
 \]

As a consequence of our result (see Remark \ref{remcrit}) we also get that,
if the rate of convergence of $\F_h(u)$ to $\F_0(u)$ is fast enough,
more precisely if $\left|\E_h (u)\right| \le M$ for all $h$'s sufficiently small, then $u\in H^2(\R^d)$.  
 
 We notice that our result is reminiscent to the one obtained by
 Peletier, Planqu\'e, and R\"oger  in \cite{PR}.
 There, motivated by a model for bilayer membranes,
 the authors considered the convolution functionals
 \[
 \G_h(u) \coloneqq \int_{\Rd} f \left( K_h * u\right) dx,
 \]
 which converge to the functional $\G_0(u) = c \int_{\Rd} f \left(u\right) dx$ as $h\searrow 0$,
 with $c=c(K,d)$ a positive constant,
 and they showed that the corresponding rate functionals 
 \begin{equation}\label{rate1D}
 \frac{\G_0(u)-\G_h(u)}{h^2} =   \frac{1}{h^2}\,\int_{\Rd} \big( c\, f \left(u\right) -  f \left( K_h * u\right)\big)dx 
\end{equation} 
converge pointwise to the limit functional
 \[
 \frac 12 \int_{\Rd} \int_{\Rd} K(z) \left|z\right|^2 f''(u(x)) |\nabla u(x)\cdot \hat z|^2 dz dx\qquad
 \text{for } u \in H^1(\R^d).
 \]
 In particular, the rate functionals are uniformly bounded if and only if $u \in H^1(\R^d)$.

In the proof of our convergence result, we follow
a strategy similar to the one in \cites{G,GM}: we first consider a related $1$-dimensional problem, 
and then reduce the general case to it by a {\it slicing} procedure. More precisely, in Section \ref{sec:1D}
we study the functionals 
\[
E_h(u) \coloneqq  \frac{1}{h^2} \int_\R \left[ f(u(x)) - f\left( \fint_{x}^{x+h}u(y)dy\right) \right] dx,
\]
which are a particular case of \eqref{rate1D},
and we show their convergence  (see Theorem  \ref{stm:1D-Gconv}) to the limit energy
\[
E_0(u)\coloneqq  \frac{1}{24} \int_\R f''(u(x)) \left|u'(x)\right|^2 dx\qquad
 \text{for } u \in H^1(\R).
\]
Then, in Section \ref{sec:arb} we consider the general functionals in \eqref{rate}
and we establish the $\Gamma$-convergence to $\E_0$ (see Theorem  \ref{stm:Gconv}),
which is the main result of the present paper.
We first show the convergence for $d=1$, using the result of Section \ref{sec:1D},
and then we reduce to the $1$-dimensional case by means of a delicate slicing technique.

\noindent {\bf Acknowledgements.} MN and VP are members of INDAM-GNAMPA,
and acknowledge partial support
by the Unione Matematica Italiana and by the
University of Pisa via Project PRA 2017 {\it Problemi di ottimizzazione e di evoluzione in ambito variazionale}.
Part of this work was done during a visit of the third author to the \'Ecole Polytechnique.
VP was also supported by the Austrian Science Fund (FWF)
through the grant I4052 N32, and by BMBWF through the OeAD-WTZ project CZ04/2019.


\section{Finite difference functionals in the $1$-dimensional case}\label{sec:1D}
Let $f\colon \R \to [0,+\infty)$ be a strongly convex function of class $C^2$
such that $f(0)=f'(0)=0$.
By saying that $f$ is strongly convex, we mean that
\begin{equation}\label{eq:f-conv}
\text{there exists $\gamma>0$
	such that $f''(t)\geq\gamma$ for all $t\in\R$.}
\end{equation}

Let us fix an open interval $I\coloneqq(a,b)\subset \R$.
We introduce the closed subspace $Y\subset L^2(\R)$
defined as
\begin{equation}\label{eq:constr}
	Y \coloneqq \set{ u \in L^2(\R) : u = 0 \text{ in } \R\setminus I},
\end{equation}
and, for  $u\in Y$ and $h>0$, we define the energy
	\begin{equation}\label{eq:Eh}
		E_h(u) \coloneqq  \frac{1}{h^2} \int_\R \left[ f(u(x)) - f\left(D_h U(x)\right) \right] dx,
	\end{equation}
where 
	\begin{equation}\label{eq:U}
		U(x)\coloneqq \int_{0}^x u(y) dy
			\quad\text{and}\quad		
		D_h U(x) \coloneqq \frac{U(x+h)-U(x)}{h} = \fint_{x}^{x+h}u(y)dy.
	\end{equation}

By using some simple changes of variable
and the positivity of $f$,
one can find
	\[\begin{split}
		\int_{\R} f(u(x))dx
		& = \int_{a}^{b} f(u(x))dx
		= \int_{a-h}^{b} \fint_{x}^{x+h} f(u(y)) dy dx \\
		& = \int_{\R} \fint_{x}^{x+h} f(u(y)) dy dx,
	\end{split}\]
the integrals possibly diverging to $+\infty$.
Then, by combining the previous identity with Jensen's inequality
	\[
	\fint_x^{x+h} f(u(y)) dy \geq  f(D_h U(x)),
	\]
we see that $E_h(u)$ ranges in $[0,+\infty]$ when $u\in Y$.
	
In the present section
we compute the  $\Gamma$-limit of $\set{E_h}$
regarded as a family of functionals on $Y$
endowed with the $L^2$-topology.
Let us set
	\begin{equation} \label{eq:E0}
	E_0(u)\coloneqq  \begin{cases}
			\begin{displaystyle}
			\frac{1}{24} \int_\R f''(u(x)) \left|u'(x)\right|^2 dx
			\end{displaystyle} 		&\text{if } u\in Y \cap H^1(\R),  \\
			+\infty 						&\text{otherwise}.
	\end{cases} 
	\end{equation}
We prove the following:
	
	\begin{thm}\label{stm:1D-Gconv} 
		Let us assume that
		$f\colon \R \to [0,+\infty)$ is a function of class $C^2$
		such that $f(0)=f'(0)=0$ and \eqref{eq:f-conv} holds.
		Then, the restriction to $Y$ of the family $\set{E_h}$ $\Gamma$-converges, as $h\searrow 0$,
		to $E_0$ w.r.t. the $L^2(\R)$-topology, that is,
		for every $u\in Y$ the following properties hold:
		\begin{enumerate}
			\item \label{stm:1D-liminf}
				For any family $\set{u_h}\subset Y$ that converges to $u$ in $L^2(\R)$ we have
					\[	E_0(u)\leq \liminf_{h\searrow 0} E_h(u_h). \]
			\item \label{stm:1D-limsup}
				There exists a sequence $\set{u_h} \subset Y$ converging to $u$ in $L^2(\R)$ such that
					\[	\limsup_{h\searrow 0} E_h(u_h)\leq E_0(u).\]
		\end{enumerate}	
	\end{thm}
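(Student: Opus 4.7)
The plan is to exploit the identity
\begin{equation*}
E_h(u) = \frac{1}{h^2}\int_\R\left[\fint_x^{x+h}f(u(y))\,dy - f(\bar u_h(x))\right]dx,
\end{equation*}
with $\bar u_h(x) := D_h U(x)$, which is already implicit in the excerpt. Writing Taylor's formula with integral remainder around $\bar u_h(x)$, and using that $\fint_x^{x+h}(u(y) - \bar u_h(x))\,dy = 0$ kills the first-order term, the change of variable $y = x + hs$ and the substitution $w_h(x,s) := h^{-1}(u(x+hs) - \bar u_h(x))$ yield the compact representation
\begin{equation*}
E_h(u) = \int_\R\int_0^1\int_0^1 (1-t)\, f''\bigl(\bar u_h(x) + thw_h(x,s)\bigr)\,w_h(x,s)^2\,dt\,ds\,dx.
\end{equation*}
This formula is the backbone of both inequalities.

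For the $\Gamma$-\emph{limsup}, I would first take $u \in C_c^\infty(I)$ and use the constant recovery sequence $u_h \equiv u$. A Taylor expansion gives $w_h(x,s) \to u'(x)(s - \tfrac12)$ and $\bar u_h(x) + thw_h(x,s) \to u(x)$ uniformly on compact sets, so the representation passes to the limit by dominated convergence with
\begin{equation*}
\lim_{h\searrow 0} E_h(u) = \int_\R f''(u)|u'|^2\,dx \cdot \int_0^1(1-t)\,dt \cdot \int_0^1 (s - \tfrac12)^2\,ds = \frac{1}{24}\int_\R f''(u)|u'|^2\,dx = E_0(u).
\end{equation*}
For general $u \in Y \cap H^1(\R)$ (that is, $u \in H^1_0(I)$), the density of $C_c^\infty(I)$, the continuity of $E_0$ under $H^1$-convergence (a consequence of $f \in C^2$ and the one-dimensional embedding $H^1 \hookrightarrow C^0$), and a standard diagonal extraction provide a recovery sequence.

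For the $\Gamma$-\emph{liminf}, suppose $u_h \to u$ in $L^2$ with $\liminf E_h(u_h) < +\infty$. The strong convexity $f'' \geq \gamma$ applied to the representation gives $\tfrac\gamma2\|w_h\|_{L^2(\R\times(0,1))}^2 \leq E_h(u_h)$, so $\{w_h\}$ is bounded in $L^2$ and, up to a subsequence, $w_h \rightharpoonup w$ weakly. A Fourier-side Fatou argument based on the multiplier $(1-|m_h(\xi)|^2)/h^2$ with $m_h(\xi) := (e^{ih\xi}-1)/(ih\xi)$, whose pointwise limit is $\xi^2/12$, shows that $u \in H^1(\R)$; testing $\int w_h\phi$ against $\phi \in C_c^\infty(\R\times(0,1))$ and integrating by parts in $x$ (using only the $L^2$-convergence $u_h\to u$ and $u\in H^1$) then identifies $w(x,s) = u'(x)(s - \tfrac12)$. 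To capture the sharp weight $f''(u)$, I would truncate: for $R > \gamma$ set $f''_R := \min(f'',R)$, so
\begin{equation*}
E_h(u_h) \geq \int_\R\int_0^1\int_0^1 (1-t)\,f''_R\bigl(\bar u_h + thw_h\bigr)\,w_h^2\,dt\,ds\,dx.
\end{equation*}
Since $\bar u_h + thw_h \to u(x)$ almost everywhere along the subsequence and $f''_R$ is bounded and continuous, the weight converges almost everywhere to $f''_R(u(x))$ while staying bounded by $R$. Combining the convex inequality $w_h^2 \geq w^2 + 2w(w_h - w)$, dominated convergence on the first summand, and strong-weak convergence on the second (exploiting that $(1-t)f''_R(\cdot)w$ converges strongly in $L^2$ by dominated convergence), yields a $\liminf$ of the right-hand side bounded below by $\tfrac{1}{24}\int f''_R(u)|u'|^2\,dx$; letting $R \to \infty$ by monotone convergence concludes.

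The main obstacle is the liminf. The sharp coefficient $\tfrac{1}{24}f''(u)$ is not reachable from the crude uniform bound $f'' \geq \gamma$, while the only compactness available on $\{w_h\}$ is weak in $L^2$. The delicate point is therefore the joint passage to the limit in the nonlinear product $f''(\bar u_h + thw_h)\,w_h^2$, where one factor depends on $h$ through composition with $f''$ and converges only almost everywhere, while the other converges merely weakly. The truncation of $f''$, together with the precise identification of the weak limit of $w_h$ as $u'(x)(s-\tfrac12)$, is where the bulk of the technical effort will reside.
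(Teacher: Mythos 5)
Your proposal is correct, and while the $\Gamma$-limsup part coincides with the paper's (Proposition \ref{stm:1D-pointlim} also computes the pointwise limit for smooth functions via a Taylor expansion and then concludes by density and a diagonal extraction; your triple-integral bookkeeping just makes the factor $\frac1{24}=\frac12\cdot\frac1{12}$ more transparent), the liminf is argued by a genuinely different route. The paper starts from the same representation (its \eqref{eq:Ch}--\eqref{eq:lambda} is your formula with $\lambda_h=\int_0^1(1-t)f''(\cdot)\,dt$), but then \emph{dualizes}: it uses the Fenchel-type inequality $\lambda a^2\ge a\phi-\phi^2/(4\lambda)$ with test functions $\phi(x,y)=\psi(x,(y-x)/h)$, passes to the limit in the linear and quadratic terms separately for each fixed $\psi$ (using only $u_h\to u$ in $L^2$ and the lower bound $\lambda_h\ge\gamma/2$), and recovers $\frac1{24}\int f''(u)|u'|^2$ by optimizing over $\psi$ of product form at the very end; the information $u\in H^1$ is supplied by a separate mollification-based compactness lemma (Lemma \ref{stm:1D-cpt}). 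You instead work on the primal side: weak $L^2$ compactness of $w_h$, identification of the weak limit as $u'(x)(s-\tfrac12)$ (your integration by parts in $x$ is essentially the same manipulation the paper performs on its linear term), the subgradient inequality $w_h^2\ge w^2+2w(w_h-w)$, truncation of $f''$ from \emph{above} (where the paper only ever needs the bound from below), and a Fourier-multiplier Fatou argument in place of the compactness lemma. Both are sound; your version is self-contained and arguably more direct for this one-dimensional statement, at the price of having to justify the joint limit of $f''_R(\bar u_h+thw_h)\,w_h^2$ (a.e. convergence of the bounded weight against a weakly convergent square, which you handle correctly), whereas the paper's dual formulation sidesteps that issue entirely and its mollification lemma is in any case reused for the $d$-dimensional compactness later on.
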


The $\Gamma$-upper limit is established in Proposition \ref{stm:1D-pointlim},
while Proposition \ref{stm:1D-Gliminf} takes care of the lower limit.
In turn, the latter is achieved by exploiting 
a suitable lower bound on the energy (see Lemma \ref{stm:1D-enbound})
and a compactness result (see Lemma \ref{stm:1D-cpt}),
which are a consequence of the strong convexity of $f$.



\subsection{Pointwise limit and upper bound}
We now compute the limit of $E_h(u)$, as $h\searrow 0$, for a function $u\in Y \cap C^2(\R)$.
We observe that strong convexity of $f$ is not needed
for the next proposition to hold.



	\begin{prop}\label{stm:1D-pointlim}
		Let $f\colon \R \to [0,+\infty)$ be a $C^2$ function  
		such that $f(0)=f'(0)=0$,
		and let $u\in Y\cap C^2(\R)$. Then,
		there exists a continuous, bounded, and increasing function
		$m\colon [0,+\infty) \to [0,+\infty)$ such that $m(0)=0$ and
			\begin{equation}\label{eq:1D-point-est}
				\left| E_h(u) - E_0(u) \right| \leq c\, m(h),
			\end{equation}
		where
		$c \coloneqq
				c( b-a,\lVert u \rVert_{C^2(\R)},\lVert f \rVert_{C^2([-\lVert u \rVert_{C^2(\R)},\lVert u \rVert_{C^2(\R)}])} ) >0$
		is a constant. In particular, $\lim_{h\searrow 0} E_h(u) = E_0(u)$
		and for every $u\in Y$ there exists a sequence $\set{u_h} \subset Y$
		that converges to $u$ in $L^2(\R)$ and satisfies
			\[	\limsup_{h\searrow 0} E_h(u_h)\leq E_0(u).	\]
	\end{prop}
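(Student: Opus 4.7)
The plan is to Taylor-expand both $D_h U(x)$ and $f(D_h U(x))$ around the points $x$ and $u(x)$ respectively, integrate the expansion over $x$, and identify $E_0(u)$ through two cancellations. Since $U' = u \in C^2(\R)$, Taylor's theorem gives
\begin{equation*}
	\delta(x) \coloneqq D_h U(x) - u(x) = \tfrac{h}{2}\, u'(x) + \tfrac{h^2}{6}\, u''(x) + r_1(x,h),
\end{equation*}
with $\lvert r_1 \rvert \le C\, h^2\, \omega_{u''}(h)$, where $\omega_{u''}$ is the modulus of continuity of $u''$. A second expansion of $f$ around $u(x)$ yields
\begin{equation*}
	f(D_h U(x)) - f(u(x)) = f'(u(x))\, \delta(x) + \tfrac{1}{2}\, f''(u(x))\, \delta(x)^2 + r_2(x,h),
\end{equation*}
with $\lvert r_2 \rvert \le C\, \omega_{f''}(\lVert\delta\rVert_\infty)\, \delta(x)^2$ and $\omega_{f''}$ the modulus of $f''$ on $[-\lVert u\rVert_\infty, \lVert u\rVert_\infty]$.

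Substituting into $-h^2 E_h(u) = \int_\R [f(D_h U) - f(u)]\, dx$ produces three leading contributions. The first, $\tfrac{h}{2}\int_\R f'(u)\, u'\, dx = \tfrac{h}{2}\int_\R (f\circ u)'\, dx$, vanishes because $u$ is compactly supported and $f(0)=0$. The second, $\tfrac{h^2}{6}\int_\R f'(u)\, u''\, dx$, becomes $-\tfrac{h^2}{6}\int_\R f''(u)\, (u')^2\, dx$ after integration by parts (boundary terms vanish by the same compact-support argument, which also forces $u'(a)=u'(b)=0$). The third, arising from the quadratic $\delta^2$ term, is $\tfrac{h^2}{8}\int_\R f''(u)\, (u')^2\, dx$ at leading order. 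Collecting,
\begin{equation*}
	-h^2 E_h(u) = \bigl(-\tfrac{1}{6} + \tfrac{1}{8}\bigr)\, h^2 \int_\R f''(u)\, (u')^2\, dx + R(h) = -h^2 E_0(u) + R(h),
\end{equation*}
where $\lvert R(h) \rvert \le c\, h^2\, m(h)$ and $m$ is a bounded, continuous, increasing function with $m(0) = 0$ obtained by combining $h$, $\omega_{u''}(h)$, and $\omega_{f''}(h)$. Dividing by $-h^2$ gives \eqref{eq:1D-point-est}.

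For the recovery-sequence consequence, if $u \in Y \setminus H^1(\R)$, then $E_0(u) = +\infty$ and $u_h \equiv u$ works. Otherwise $u$ belongs to $H^1_0(I)$, so there exist $v_\eps \in C^\infty_c(I) \subset Y \cap C^2(\R)$ converging to $u$ in $H^1(\R)$; since the one-dimensional Sobolev embedding bounds $\lVert v_\eps\rVert_\infty$ uniformly, $E_0(v_\eps) \to E_0(u)$ by dominated convergence. Applying \eqref{eq:1D-point-est} to each $v_\eps$ and then selecting diagonally $u_h \coloneqq v_{\eps(h)}$ with $\eps(h) \searrow 0$ slowly enough that $c(v_{\eps(h)})\, m(h) \to 0$ delivers the required sequence.

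The main obstacle is the algebraic cancellation $\tfrac{1}{6} - \tfrac{1}{8} = \tfrac{1}{24}$: the third-order term in the Taylor expansion of $U(x+h)-U(x)$ must be retained, so that the resulting $f'(u)\, u''$ contribution can be integrated by parts and combined with the quadratic term to yield the correct coefficient. Once this bookkeeping is in place, the remainder bounds via $\omega_{u''}$ and $\omega_{f''}$ are routine, and the pointwise convergence $E_h(u) \to E_0(u)$ follows immediately from \eqref{eq:1D-point-est}.
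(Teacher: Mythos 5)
Your proof is correct and follows essentially the same route as the paper: third-order Taylor expansion of $D_hU$, second-order expansion of $f$, the vanishing of the $\int f'(u)u'$ term and the integration by parts of the $\int f'(u)u''$ term producing the cancellation $\tfrac16-\tfrac18=\tfrac1{24}$, with remainders controlled by the moduli of continuity of $u''$ and $f''$, followed by mollification plus a diagonal argument for the recovery sequence. The only cosmetic difference is that the paper isolates the boundary strip $(a-h,a)$, where $D_hU\neq 0$ but $u=0$, and bounds its contribution by $O(h^5)$ separately, whereas you absorb it into integrals over all of $\R$ (and the paper also treats $h>1$ explicitly to make $m$ bounded, a triviality your argument covers by capping $m$).
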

	\begin{proof}
		Since $u\in Y\cap C^2(\R)$ and $f\in C^2(\R)$,
		it is easy to see that $h^2E_h(u)$ and $E_0(u)$ are uniformly bounded in $h$.
		Thus, there exists a constant $c_\infty>0$ such that
			\begin{equation}\label{eq:h>1}
				\left| E_h(u) - E_0(u) \right| \leq c_\infty \qquad \text{for } h>1.
			\end{equation}
		
		Next, we focus on the case $h\in(0,1]$.
		If $x\notin (a-h,b)$, then $D_h U(x) = 0$, and hence
			\[
				h^2 E_h(u) = \int_{a}^{b} \left[ f(u(x)) - f\left(D_h U(x)\right) \right]dx - \int_{a-h}^a f(D_h U(x)) dx.
			\]
		Being $u$ regular, for any $x\in(a-h,b)$
		we have the Taylor's expansion 
			\begin{equation*}
				D_h U(x)=u(x)+\frac{h}{2}u'(x)+\frac{h^2}{6}u''(x_h), \qquad\text{with $x_h\in(x,x+h)$},
			\end{equation*}
		which we rewrite as
			\begin{equation}\label{eq:Taylor-DhU}
				D_h U(x)=u(x)+h v_h(x), \qquad\text{with }\ v_h(x)\coloneqq \frac{u'(x)}{2}+\frac{h}{6}u''(x_h);
			\end{equation}
		note that $v_h$ converges uniformly to $u'/2$ as $h\searrow 0$.
		
		Plugging \eqref{eq:Taylor-DhU} into the definition of $E_h$, we get
			\[\begin{split}
				h^2 E_h(u) = & - \int_a^b \left[f\big( u(x) + h v_h(x) \big) - f(u(x)) \right]dx
										- \int_{a-h}^a f\left( \frac{h^2}{6} u''(x_h) \right) dx \\
								= & -h\int_a^b f'(u(x))v_h(x) dx - \frac{h^2}{2}\int_a^b f''(w_h(x))v_h(x)^2 dx \\
									& - \int_{a-h}^a f\left( \frac{h^2}{6} u''(x_h) \right) dx,
			\end{split}\]
		where $w_h$ fulfils $w_h(x)\in(u(x),u(x)+hv_h(x_h))$ for all $x\in(a,b)$.
		
		In view of the regularity of $f$ and $u$,
		we can utilize the Mean Value Theorem to obtain
			\[
				\left| \int_{a-h}^a f\left( \frac{h^2}{6} u''(x_h) \right) dx \right| \leq c_1 h^5
			\]
		for a constant $c_1>0$
		that depends only on $N\coloneqq \lVert u \rVert_{C^2(\R)}$
		and on $\lVert f'' \rVert_{L^\infty( [-N,N] )}$.
		Moreover, recalling the definition of $v_h$, we have
			\[
				\int_a^b f'(u(x))v_h(x) dx
					=  \frac{h}{6}\int_a^b f'(u(x)) u''(x_h) dx,
			\]
		and therefore
			\begin{equation}\label{eq:Eh-E0}
			\begin{split}
				\left| E_h(u) - E_0(u) \right| 
					\leq & \frac{1}{6}\left| - \int_a^b f'(u(x)) u''(x_h) dx
									- \int_a^b f''(u(x)) u'(x)^2 dx \right|
					\\ & + \frac{1}{2} \left| \frac{1}{4} \int_a^b f''(u(x)) u'(x)^2 dx
									- \int_a^b f''(w_h(x))v_h(x)^2 dx \right| 
							+ c_1 h^5.
			\end{split}		
			\end{equation}
		Since $u\in Y\cap C^2(\R)$,
		$u''$ admits a uniform modulus of continuity $m_{u''}\colon [0,+\infty) \to [0,\infty)$.
		An integration by parts gives that
			\[\begin{split}
				\left| -\int_a^b f'(u(x)) u''(x_h) dx - \int_a^b f''(u(x)) u'(x)^2 dx \right| 
					\leq &  \int_a^b \left| f'(u(x)) \right|  \left| u''(x) - u''(x_h) \right| dx \\
					\leq & \ c_2 m_{u''}(h),
			\end{split}\]
		where $c_2 \coloneqq (b-a) \lVert f' \rVert_{L^\infty( [-N, N] )}$.
		
		In a similar manner,
		denoting by $m_{f''}$ the modulus of continuity of the restriction of $f''$ to the interval $[-N,N]$,
		we also find
 			\begin{multline*}
				\left| \frac{1}{4} \int_a^b f''(u(x)) u'(x)^2 dx - \int_a^b f''(w_h(x))v_h(x)^2 dx \right| 
					\\ \leq  \int_a^b \left| f''(u(x)) \right|  \left| \frac{1}{4} u'(x)^2 -  v_h(x)^2\right| dx 
						+   \int_a^b \left| f''(u(x)) - f''(w_h(x)) \right|  v_h(x)^2 dx
					\\ \leq c_3( h + m_{f''}(h) ),
		\end{multline*}
		with $c_3$ depending on $b-a$, $N$, and $\lVert f'' \rVert_{L^\infty( [-N, N] )}$.
		
		By combining \eqref{eq:Eh-E0} with the inequalities above,
		we obtain
			\begin{equation}\label{eq:h<1}
				\left| E_h(u) - E_0(u) \right| \leq c_0 \big( m_{u''}(h) + m_{f''}(h) +h + h^5 \big)
						\qquad \text{for } h\in(0,1],
			\end{equation}
		for a suitable constant $c_0>0$.		
		At this stage, \eqref{eq:1D-point-est} follows by combining \eqref{eq:h>1} and \eqref{eq:h<1}.
		
		As for the existence of a family that fulfils the upper limit inequality,
		we apply a standard density argument that we sketch in the following lines.		
		Let $u\in Y$. If $u\notin H^1(\R)$,
		the inequality holds trivially;
		otherwise, by rescaling the domain and mollifying,
		we construct a sequence of smooth functions $\set{u_\ell} \subset Y$
		that converges to $u$ both uniformly and in $H^1(\R)$.
		Then, since $f''$ is a continuous function,
		we get
		$\lim_{\ell \nearrow +\infty} E_0(u_\ell) = E_0(u)$.
		Besides, we know that
		$\lim_{h \searrow 0} E_h(u_\ell) = E_0(u_\ell)$
		for any $\ell\in \mathbb{N}$, because $u_\ell$ is smooth.
		We conclude that
		there exists a subsequence $\set{h_\ell}$ such that
			\[
			\lim_{\ell \nearrow +\infty} E_{h_\ell}(u_\ell) = E_0(u).
			\]
	\end{proof}


\begin{rmk}\label{remscala1}
Notice that, as a consequence of Proposition \ref{stm:1D-pointlim}, the $\Gamma$-limit of 
the rate functionals 
\[
h E_h(u) = \frac{1}{h} \int_\R \left[ f(u(x)) - f\left(D_h U(x)\right) \right] dx
\]
is equal to zero.
\end{rmk}


\subsection{Lower bound in the strongly convex case}
In view of Proposition \ref{stm:1D-pointlim},
to accomplish the proof of Theorem \ref{stm:1D-Gconv},
it only  remains to establish statement \eqref{stm:1D-liminf},
that is, for any $u\in Y$ and for any family $\set{u_h}\subset Y$ converging to $u$ in $L^2(\R)$
it holds
	\[	E_0(u)\leq \liminf_{h\searrow 0} E_h(u_h). 	\]

In the current subsection we utilize the strong convexity of  the function $f$.
We exploit this hypothesis to provide a lower bound on the energy $E_h$, by means of which
we prove that
sequences with equibounded energy are relatively compact w.r.t. the $L^2$-topology.
	
	\begin{lemma}[Lower bound on the energy]\label{stm:1D-enbound}
	Let us assume that
	$f\colon \R \to [0,+\infty)$ is a function of class $C^2$
	such that $f(0)=f'(0)=0$ and \eqref{eq:f-conv} is fulfilled.
	Then, for any $u\in Y$, it holds
		\begin{equation}\label{eq:1D-lbound}
			E_h(u)\geq \sup_{\phi \in C^\infty_c\left(\R^2\right)}
								\left\{ \int_\R\fint_x^{x+h}
											\left(
												\frac{u(y) - D_hU(x)}{h} \phi(x,y) - \frac{\phi(x,y)^2}{4\lambda_h(x,y)}
											\right) dy dx
								\right\},
		\end{equation}
	with		
		\begin{equation} \label{eq:lambda}
			\lambda_h(x,y)\coloneqq \int_0^1 (1-\theta)f''\big((1-\theta)D_hU(x)+\theta u(y)\big)d\theta.
		\end{equation}
	Moreover, 
	\begin{equation}\label{eq:1D-lbound2}
		E_h(u) \geq \frac{\gamma}{4} \int_\R\int_{-h}^{h}
							J_h(r)\left(\frac{u(y+r) - u(y)}{h}\right)^2 drdy,
		\end{equation}
	where
		\begin{equation*}
		J(r)\coloneqq (1-|r|)^+
			\quad\text{and}\quad
		J_h(r)\coloneqq \frac{1}{h}J\left(\frac{r}{h}\right).
		\end{equation*}
	\end{lemma}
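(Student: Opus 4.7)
The plan is to start from a second order Taylor expansion with integral remainder applied to $f$. Writing, for $x\in\R$ and $y\in(x,x+h)$,
\[
f(u(y)) - f(D_hU(x)) = f'(D_hU(x))\bigl(u(y)-D_hU(x)\bigr) + \lambda_h(x,y)\bigl(u(y)-D_hU(x)\bigr)^2,
\]
with $\lambda_h$ as in \eqref{eq:lambda}, and averaging in $y$ over $(x,x+h)$, the linear term drops because $\fint_x^{x+h}(u(y)-D_hU(x))\,dy=0$ by the very definition of $D_hU$. Combining this with the identity $\int_\R f(u(x))\,dx=\int_\R\fint_x^{x+h}f(u(y))\,dy\,dx$ already used in the excerpt, I obtain the exact representation
\[
E_h(u) = \int_\R \fint_x^{x+h} \lambda_h(x,y)\left(\frac{u(y)-D_hU(x)}{h}\right)^2 dy\,dx.
\]

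For \eqref{eq:1D-lbound}, I apply the elementary inequality $a t^2 \geq t\phi - \phi^2/(4a)$, valid for any $a>0$, $t\in\R$ and $\phi\in\R$, to $a=\lambda_h(x,y)$ and $t=(u(y)-D_hU(x))/h$, with $\phi=\phi(x,y)$ a smooth compactly supported test function. Integrating and taking the supremum over $\phi\in C_c^\infty(\R^2)$ yields \eqref{eq:1D-lbound}. (Formally, equality is attained at $\phi = 2\lambda_h(u(y)-D_hU(x))/h$.)

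For \eqref{eq:1D-lbound2}, I use the strong convexity $f''\geq \gamma$ to estimate $\lambda_h(x,y)\geq \gamma\int_0^1(1-\theta)\,d\theta=\gamma/2$. Then, using the variance identity
\[
\fint_x^{x+h}\bigl(u(y)-D_hU(x)\bigr)^2 dy = \frac12\fint_x^{x+h}\fint_x^{x+h}\bigl(u(y)-u(z)\bigr)^2 dz\,dy,
\]
I get
\[
E_h(u)\geq \frac{\gamma}{4}\cdot\frac{1}{h^2}\int_\R\fint_x^{x+h}\fint_x^{x+h}\bigl(u(y)-u(z)\bigr)^2 dz\,dy\,dx.
\]
The last step — which I expect to be the main bookkeeping obstacle — is the change of variables turning this triple integral into the tent-function form. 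Parametrizing $y=x+sh$, $z=x+th$ with $s,t\in(0,1)$, setting $r=(s-t)h$ and translating $x\mapsto x-th$, the integral in $x$ decouples and the Jacobian of the $(s,t)\mapsto(t,r)$ change of variable leaves $t$ free in an interval of length $(h-|r|)/h$ for each $r\in(-h,h)$. This produces the factor $(h-|r|)/h^2=J_h(r)$, so that
\[
\frac{1}{h^2}\int_\R\fint_x^{x+h}\fint_x^{x+h}\bigl(u(y)-u(z)\bigr)^2 dz\,dy\,dx
= \int_\R\int_{-h}^{h} J_h(r)\left(\frac{u(x+r)-u(x)}{h}\right)^2 dr\,dx,
\]
which, combined with the previous inequality, yields \eqref{eq:1D-lbound2}.
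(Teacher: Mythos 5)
Your proposal is correct and follows essentially the same route as the paper's proof: the same Taylor expansion with integral remainder (whose linear term vanishes upon averaging, yielding the exact representation of $E_h$ via $\lambda_h$), the same duality inequality $at^2\ge t\phi-\phi^2/(4a)$ for \eqref{eq:1D-lbound}, and the same combination of $\lambda_h\ge\gamma/2$, the variance identity, and the change of variables producing the tent kernel $J_h$ for \eqref{eq:1D-lbound2}.
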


	\begin{proof}
	For a given $h>0$, let us consider $u\in Y$ such that $E_h(u)$ is finite.
	We write
		\[\begin{gathered}
			E_h(u) = \frac{1}{h^2} \int_\R e_h(x) dx,
			\quad\text{where } e_h(x) \coloneqq \fint_x^{x+h} [f(u(y))-f(D_hU(x))] dy.
		\end{gathered}\]
	Thanks to the identity
		\[f(s)-f(t) = f'(t)(s-t)+(s-t)^2 \int_0^1 (1-\theta) f''((1-\theta)t+\theta s))d\theta, \]
	we find
		\begin{equation}\label{eq:Ch}
		e_h(x) = \fint_x^{x+h} \lambda_h(x,y) \left(u(y) - D_hU(x)\right)^2 dy,
		\end{equation}
	where $\lambda_h(x,y)$ is as in \eqref{eq:lambda}.
	Observe that for any $\phi \in C^\infty_c\left(\R^2\right)$
	we have the pointwise inequality
		\[
		\lambda_h(x,y) \left(u(y) - D_hU(x)\right)^2 \geq \frac{u(y) - D_hU(x)}{h} \phi(x,y) - \frac{\phi(x,y)^2}{4\lambda_h(x,y)},
		\]
	from which we infer \eqref{eq:1D-lbound}.
	
	The strong convexity of $f$ grants that
	$\lambda_h(x,y) \ge \gamma / 2$ for all $(x,y)\in \R^2$ and $h>0$,
	thus we also deduce that
		\[
			e_h(x)\geq \frac{\gamma}{2}\fint_x^{x+h}\left(u(y) - D_hU(x)\right)^2.
		\]
	Hence, we get
		\begin{equation*}\begin{split}
			E_h(u)\geq &\frac{\gamma}{2} \int_\R\fint_x^{x+h}\left(\frac{u(y) - D_hU(x)}{h}\right)^2dydx \\
						\geq & \frac{\gamma}{4} \int_\R\fint_x^{x+h}\fint_x^{x+h}\left(\frac{u(z) - u(y)}{h}\right)^2 dzdydx,
		\end{split}
		\end{equation*}
	where the last inequality follows from the identity
		\[ 
		\int |\phi(y)|^2d\mu(y) = \left|\int \phi(y) d\mu(y)\right|^2 + \frac{1}{2}\int\int |\phi(z)-\phi(y)|^2d\mu(z)d\mu(y),
		\]
	which holds whenever $\mu$ is a probability measure and $\phi\in L^2(\mu)$.
	By Fubini's Theorem and neglecting contributions near the boundary,
	we find the lower bound on the energy:
		\begin{equation*}
		\begin{split}
			E_h(u)\geq &\frac{\gamma}{4} \int_\R\fint_{y-h}^y\fint_x^{x+h}
															\left(\frac{u(z) - u(y)}{h}\right)^2 dzdxdy \\
					= &\frac{\gamma}{4h} \int_\R\int_{y-h}^{y+h}
															\left(1-\frac{\left|z-y\right|}{h}\right)\left(\frac{u(z) - u(y)}{h}\right)^2 dzdy.
		\end{split}
		\end{equation*}
	The conclusion \eqref{eq:1D-lbound2} is now achieved by the change of variables $r=z-y$.
		\end{proof}
	
	\begin{rmk}\label{rmk:f''bounded}
		Let $u\in Y \cap H^1(\R)$.
		If along with the previous assumptions
		we also require that $f''$ is bounded above by a constant $c>0$,
		then the family $\set{E_h(u)}$ is uniformly bounded.
		Indeed, it follows from \eqref{eq:Ch} and the definition of $\lambda_h$
		that
		\begin{equation*}
			E_h(u) \leq \frac{c}{2 h^2} \int_{\R} \fint_x^{x+h} \left(u(y) - D_hU(x)\right)^2 dy dx.
		\end{equation*}
		Then, since $u\in H^1(\R)$,
		when $y\in (x,x+h)$ it holds
			\[
				\left| u(y) - D_hU(x) \right|^2
				 \leq h \int_{x}^{x+h} \left| u'(z)\right|^2 dz
					= h^2 \int_{0}^{1} \left| u'(x + hz)\right|^2 dz,
			\]
		and we derive the estimate
			\begin{equation}\label{eq:Ehbounded}
			E_h(u) \leq \frac{c}{2} \int_{\R} \left| u'(x )\right|^2 dx.
			\end{equation}
	\end{rmk}
		
	\begin{lemma}[Compactness]\label{stm:1D-cpt}
	Let the function $f$ be as in Lemma \ref{stm:1D-enbound} and
	let $\set{u_h}\subset Y$ be a sequence of functions such that
	$E_h(u_h)\leq M $ for some $M\geq 0$.
	Then, there exist a subsequence $\set{u_{h_\ell}}$ and a function $u\in Y \cap H^1(\R)$
	such that $u_{h_\ell}\to u$ in $L^2(\R)$.		
	\end{lemma}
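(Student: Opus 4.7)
The starting point is the lower bound \eqref{eq:1D-lbound2} of Lemma \ref{stm:1D-enbound}, which after multiplying and dividing by $r^2$ takes the Bourgain--Brezis--Mironescu form
\[
\int_{\R}\int_{\R}\rho_h(r)\,\frac{|u_h(y+r)-u_h(y)|^2}{r^2}\,dr\,dy \leq \frac{4M}{\gamma},
\]
where $\rho_h(r):=6\,J_h(r)\,r^2/h^2$ is even, nonnegative, supported in $[-h,h]$, and has unit mass because $\int_{-1}^{1}(1-|s|)s^2\,ds = 1/6$. In particular, the family $\{\rho_h\}$ concentrates at the origin as $h\searrow 0$ in the sense of~\cite{P}, which is the hypothesis that drives the compactness theorem below.

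The first step is to obtain a uniform bound $\sup_h\|u_h\|_{L^2(\R)} < \infty$, which is not part of the hypotheses but will follow from the support condition $u_h\in Y$. Rewriting \eqref{eq:1D-lbound2} as $\int\!\int J_h(r)\,|u_h(y+r)-u_h(y)|^2\,dr\,dy \leq 4Mh^2/\gamma$ and using that $J_h\geq 1/(2h)$ on $[h/4,h/2]$, an averaging argument produces some $r_h\in[h/4,h/2]$ with $\|u_h(\cdot+r_h)-u_h\|_{L^2(\R)}^2 \leq Ch^2$. For $y_0\in I$, I choose $K\leq\lceil(b-a)/r_h\rceil\leq C/h$ so that $y_0-Kr_h\notin I$, and hence $u_h(y_0-Kr_h)=0$; this allows me to telescope
\[
u_h(y_0) = \sum_{k=1}^{K}\bigl[u_h(y_0-(k-1)r_h)-u_h(y_0-kr_h)\bigr].
\]
Cauchy--Schwarz, integration over $y_0\in I$, and translation invariance give $\|u_h\|_{L^2}^2 \leq K^2 \cdot Ch^2 \leq C'$, the required uniform bound.

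With the $L^2$-bound in hand, I invoke the compactness theorem of Ponce~\cite{P}: the combination of concentration of $\{\rho_h\}$ at the origin, uniform $L^2$-boundedness of $\{u_h\}$, and the bound on the nonlocal seminorms above yields that $\{u_h\}$ is relatively compact in $L^2_{\mathrm{loc}}(\R)$ and every cluster point $u$ lies in $H^1(\R)$. Since all $u_h$ are supported in the fixed compact set $\overline{I}$, $L^2_{\mathrm{loc}}$-convergence upgrades to $L^2(\R)$-convergence, and the limit $u$ inherits the vanishing outside $I$; thus $u\in Y\cap H^1(\R)$, which is the claim.

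The main obstacle is the uniform $L^2$-bound of the first step: Ponce's compactness theorem presupposes such an a priori bound, but it is not assumed here. The membership $u_h\in Y$, i.e.\ the compact support in $\overline{I}$, is precisely what makes the Poincar\'e-type telescoping argument succeed and convert the $h$-scaled nonlocal seminorm into a genuine $L^2$-bound.
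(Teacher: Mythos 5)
Your argument is correct, but it follows a genuinely different route from the paper. The paper adapts the strategy of Alberti--Bellettini \cite{AB}: it mollifies $u_h$ with a smooth even kernel $\rho_h$ dominated (together with $\rho_h'$) by $J_h$, uses the bound \eqref{eq:1D-lbound2} to control $\lVert v_h'\rVert_{L^2}$ and $\lVert v_h-u_h\rVert_{L^2}=O(h)$ directly, and then concludes by Poincar\'e and Sobolev embedding on a fixed bounded interval --- no a priori $L^2$-bound on $u_h$ is ever needed, since Poincar\'e applied to the compactly supported mollifications substitutes for it. You instead first manufacture a uniform $L^2$-bound by an averaging-plus-telescoping argument (selecting a good shift $r_h\in[h/4,h/2]$ and exploiting $u_h=0$ outside $\bar I$), and then delegate the compactness to the Bourgain--Brezis--Mironescu/Ponce theorem for nonlocal seminorms with concentrating kernels. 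Both steps of yours are sound: the normalization $\int_{-1}^1 6(1-|s|)s^2\,ds=1$ is right (though the resulting bound is $24M/\gamma$, not $4M/\gamma$ --- harmless), the choice of $r_h$ and the telescoping over $K\leq C/h$ steps correctly yield $\lVert u_h\rVert_{L^2}^2\leq K^2\cdot Ch^2\leq C'$, and the passage from $L^2_{\mathrm{loc}}$ to $L^2(\R)$ via the common compact support is legitimate. What your approach buys is brevity and a reduction to a known black box; what it costs is self-containedness, and you should cite the compactness statement precisely (it is Theorem~4 of \cite{BBM}, stated on bounded domains, rather than a result of \cite{P}, which builds on it), restricting your double integral to a fixed bounded interval containing $\bar I$ so that the hypotheses match.
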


	\begin{proof}	
	We adapt the strategy of \cite[Theorem 3.1]{AB}.
	
	By Lemma \ref{stm:1D-enbound}, we infer that
		\begin{equation}\label{eq:1D-lbound-bis}
		\frac{\gamma}{4} \int_\R\int_{-h}^{h}
			J_h(r)\left(\frac{ u_h(y+r) -  u_h(y)}{h}\right)^2 dr dy \leq M.
		\end{equation}
	Observe that $J_h(r)dr$ is a probability measure on $[-h,h]$.
	
	We now introduce the mollified functions $v_h \coloneqq \rho_h \ast  u_h$,
	where $\set{\rho_h}$ is the family 
		\[	\rho_h(r)\coloneqq \frac{1}{ch}\rho\left(\frac{r}{h}\right), \qquad \text{with }c\coloneqq \int_\R\rho(r)dr.	\]
	Here, $\rho\in C^{\infty}_c(\R)$ is an even kernel,
	and it is chosen in such a way that its support is contained in $[-1,1]$,
		\[	0\leq \rho \leq J, \quad\text{and}\quad \left|\rho'\right| \leq J.\]
	Note that, for all $h>0$, $v_h\colon \R\to\R$ is a smooth function
	whose support is a subset of $(a-h,b+h)$.
	Moreover, the family of derivatives $\set{v'_h}_{h\in(0,1)}$ is uniformly bounded in $L^2(\R)$;
	indeed, since $\int_\R \rho'(r)dr =0$, it holds
		\[\begin{split}
			\int_\R \left| v_h'(y) \right|^2 dy 
				= & \int_\R \left| \int_{-h}^{h} \rho'_h(r)[ u_h(y+r) - u_h(y)]dr \right|^2 dy \\
				\le &  \int_\R\left(\int_{-h}^{h} \left|\rho'_h(r)\right|\left| u_h(y+r) - u_h(y) \right| dr \right)^2 dy \\
				\le &  \frac{1}{c^2}\int_\R 
							\left(\int_{-h}^{h} J_h(r) \left| \frac{ u_h(y+r) - u_h(y)}{h}\right| dr \right)^2 dy \\
				\le &  \frac{1}{c^2}\int_\R \int_{-h}^{h} J_h(r) \left|\frac{ u_h(y+r) -  u_h(y)}{h}\right|^2 dr dy,
		\end{split}\]
	and thus
		\begin{equation}\label{eq:vh-H^1}	
		\int_\R \left|v_h'(y)\right|^2 dy \leq \frac{4M}{c^2\gamma}.
		\end{equation}
	For all $h\in(0,1)$, let $\tilde v_h$ be the restriction of $v_h$ to the interval $(a-1,b+1)$.
	By Poincar\'e inequality, \eqref{eq:vh-H^1} entails boundedness in $H^1_0((a-1,b+1))$
	of the family $\set{\tilde v_h}_{h\in(0,1)}$,
	and, in view of Sobolev's Embedding Theorem, this grants in turn that
	there exists a subsequence $\set{\tilde v_{h_\ell}}$
	uniformly converging to some $ \tilde u\in H^1_0([a-1,b+1])$.
	Since each $\tilde v_{h_\ell}$ is supported in $(a-h_\ell,b+h_\ell)$,
	we see that $\tilde u\in H^1_0(\bar I)$; therefore, if we set
		\[
			u(x)\coloneqq\begin{cases}
					\tilde u(x) 	&\text{if } x\in \bar I, \\
					0					& \text{otherwise},
					\end{cases}
		\]
	we deduce that $\set{v_{h_\ell}}$ converges uniformly to $u \in Y\cap H^1(\R)$.	
	
	Lastly, to achieve the conclusion,
	we provide a bound on the $L^2$-distance between $u_h$ and $v_h$.
	Similarly to the previous computations, we have 
		\[\begin{split}
			\int_\R \left|v_h(y)- u_h(y)\right|^2 dy 
						= & \int_\R\left|\int_{-h}^{h} \rho_h(r)[ u_h(y+r) - u_h(y)]dr \right|^2 dy \\
						\leq & \int_\R \int_{-h}^{h} \rho_h(r)\left| u_h(y+r) - u_h(y)\right|^2 dr dy \\
						\leq & \frac{1}{c}\int_\R \int_{-h}^{h} J_h(r) \left| u_h(y+r) - u_h(y) \right|^2 dr dy,
		\end{split}	\]
	and, by \eqref{eq:1D-lbound-bis}, we get
		\begin{equation}\label{eq:vh-L2}
			\int_\R |v_h(y) - u_h(y)|^2 dy  \leq \frac{4M}{c\gamma} h^2.
		\end{equation}
	Since there exists a subsequence $\set{v_{h_\ell}}$
	uniformly converging to a function $u \in Y\cap H^1(\R)$,
	\eqref{eq:vh-L2} gives the conclusion.
\end{proof}

Now we can prove statement \eqref{stm:1D-liminf} of Theorem \ref{stm:1D-Gconv}.

	\begin{prop}\label{stm:1D-Gliminf}
		Let the function $f$ be as in Lemma \ref{stm:1D-enbound}.
		Then, for any $u\in Y$ and for any family $\set{u_h}\subset Y$ that converges to $u$ in $L^2(\R)$,
		it holds
			\begin{equation}\label{eq:1D-Gliminf}
				E_0(u)\leq \liminf_{h\searrow 0} E_h(u_h).
			\end{equation}
	\end{prop}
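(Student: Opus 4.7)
The plan is to combine the dual variational bound \eqref{eq:1D-lbound} with the compactness provided by Lemma \ref{stm:1D-cpt}. First, I will assume $\liminf_{h \searrow 0} E_h(u_h) < +\infty$, since otherwise the inequality is trivial, and extract a subsequence (not relabelled) along which this liminf is attained. Lemma \ref{stm:1D-cpt} then forces the $L^2$-limit $u$ to belong to $Y \cap H^1(\R)$, so comparison with $E_0(u)$ is meaningful and we may actually speak of $u'$.

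The key step is to plug into \eqref{eq:1D-lbound} the family of test functions $\phi_h(x,y) = \psi(x)\, g((y-x)/h)$, where $\psi, g \in C_c^\infty(\R)$ are to be tuned at the end; note that for each $h>0$ this $\phi_h$ lies in $C_c^\infty(\R^2)$. After the change of variable $y=x+hs$, the bound becomes
\begin{equation*}
E_h(u_h) \geq \int_\R \int_0^1 \frac{u_h(x+hs)-D_h U_h(x)}{h}\,\psi(x)\, g(s)\, ds\, dx \ -\ \int_\R \int_0^1 \frac{\psi(x)^2\, g(s)^2}{4\lambda_h(x,x+hs)}\, ds\, dx.
\end{equation*}

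The next step is to pass to the limit $h\to 0$ term by term. For the first term, I would use $D_h U_h(x) = \int_0^1 u_h(x+ht)\,dt$ and integrate by parts in $x$ to move the discrete derivative onto $\psi$: since $u_h\to u$ in $L^2(\R)$ and $u \in H^1(\R)$, the quotient $\int_\R \psi(x)\,[u_h(x+hs)-u_h(x+ht)]/h\,dx$ tends to $(s-t)\int_\R \psi u'\,dx$, which upon integration in $s,t$ against $g$ gives the limit $\bigl(\int_\R \psi u'\,dx\bigr)\cdot \int_0^1 g(s)(s-\tfrac12)\,ds$. For the second term, the strong convexity of $f$ yields the uniform lower bound $\lambda_h \geq \gamma/2$, while $\lambda_h(x,x+hs)\to f''(u(x))/2$ pointwise a.e.\ (after passing to a further subsequence for which $u_h\to u$ pointwise), so dominated convergence produces $\bigl(\int_\R \psi^2/(2 f''(u))\,dx\bigr)\cdot \int_0^1 g(s)^2\,ds$.

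The final step is to optimize the two test functions. Taking $g$ to agree with $s-\tfrac12$ on $[0,1]$ (extended smoothly to compact support) makes both $s$-integrals equal to $1/12$, and the resulting inequality is then maximized pointwise in $\psi$ by $\psi(x) = f''(u(x))\,u'(x)$, the value at this maximizer being exactly $E_0(u)=\tfrac{1}{24}\int f''(u)|u'|^2\,dx$. A density argument, approximating $f''(u)u'$ by $C_c^\infty$ functions in the weighted $L^2$-norm with weight $1/f''(u)$ (well controlled because $f'' \geq \gamma > 0$), handles the requirement that $\psi$ be smooth and compactly supported. The hardest part I anticipate is rigorously justifying the weak convergence of the finite-difference quotient $(u_h(x+hs)-u_h(x+ht))/h$ starting from only $L^2$-convergence of $u_h$; the integration-by-parts trick saves the day, but the crucial input is precisely the $H^1$-regularity of $u$ delivered by the compactness lemma.
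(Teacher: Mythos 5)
Your proposal is correct and follows essentially the same route as the paper: finiteness of the liminf plus Lemma \ref{stm:1D-cpt} to get $u\in Y\cap H^1(\R)$, then the dual bound \eqref{eq:1D-lbound} with a test function of the form $\psi(x)\,g((y-x)/h)$, passing to the limit by transferring the difference quotient onto the smooth factor, and optimizing with $g(s)\approx s-\tfrac12$ and $\psi\approx f''(u)u'$. The only (cosmetic) difference is that the paper works with a general two-variable test function $\psi(x,(y-x)/h)$ and only at the end specializes to the product $\eta(x)(\zeta_k(r)-\tfrac12)$ with $\zeta_k$ a mollification of $r\mapsto r$, whereas you take the product form from the outset; both yield the same constants $\tfrac1{12}$ and the same final supremum $\tfrac1{24}\int f''(u)|u'|^2$.
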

	
	\begin{proof}
		Fix $u,u_h\in Y$ in such a way that $u_h\to u$ in $L^2(\R)$.
		We can suppose that the inferior limit in \eqref{eq:1D-Gliminf} is finite,
		otherwise the conclusion holds trivially.
		Consequently, up to extracting a subsequence, which we do not relabel,
		there exists $\lim_{h\searrow 0} E_h(u_h)$ and it is finite.
		In particular, there exists $M\geq 0$ such that $E_h(u_h)\leq M$ for all $h>0$,
		and, by Lemma \ref{stm:1D-cpt}, this yields that $u\in Y \cap H^1(\R)$.
		
		We use formula \eqref{eq:1D-lbound} for each $u_h$,
		choosing, for $(x,y) \in \R^2$, 
			\[
				\phi(x,y) = \psi\left(x,\frac{y-x}{h}\right),
						\qquad \text{with } \psi\in C_c^\infty(\R^2).
			\]
		We get
			\begin{equation}\label{eq:psi-estimate}
				\begin{split}
				E_h(u_h) \geq	
						& \int_\R\fint_x^{x+h}\frac{u_h(y) - \fint_{x}^{x+h} u_h}{h} \psi\left(x,\frac{y-x}{h}\right) dy dx \\
						& - \frac{1}{4} \int_\R\fint_x^{x+h}\frac{\psi\left(x,\tfrac{y-x}{h}\right)^2}{\lambda_h(x,y)} dy dx,
				\end{split}
			\end{equation}
		where, coherently with \eqref{eq:lambda},
			\begin{equation*}
			\lambda_h(x,y) \coloneqq \int_0^1 (1-\theta)f''\left((1-\theta)\fint_{x}^{x+h}u_h(z)dz+\theta u_h(y)\right)d\theta
			 \geq \frac{\gamma}{2}.
			\end{equation*}
		
		Let us focus on the first quantity on the right-hand side of \eqref{eq:psi-estimate}.
		We have
			\[\begin{split}
				\frac{1}{h}\int_\R\fint_x^{x+h} &
							\left(\fint_{x}^{x+h} u_h(z) dx \right)\psi\left(x,\frac{y-x}{h}\right) dy dx \\
				= & \frac{1}{h^3}\int_\R\int_x^{x+h}\int_x^{x+h} u_h(z) \psi\left(x,\frac{y-x}{h}\right) dy dz dx \\
				= 
					& \frac{1}{h^3}\int_\R\int_{z-h}^{z} \int_x^{x+h}u_h(z) \psi\left(x,\frac{y-x}{h}\right) dy dx dz,
			\end{split}\]
		and, by similar computations, we obtain
			\begin{equation}\label{eq:psi}
				\begin{split}
				\int_\R\fint_x^{x+h} & \frac{u_h(y) - \fint_{x}^{x+h} u_h}{h} \psi\left(x,\frac{y-x}{h}\right) dy dx \\
				 	= 
				 		& \frac{1}{h}\int_\R\fint_{y-h}^{y} \fint_x^{x+h}
				 					u_h(y)\left[\psi\left(x,\frac{y-x}{h}\right) - \psi\left(x,\frac{z-x}{h}\right)\right]
				 					dz dx dy.
				\end{split}
			\end{equation}
		By a simple change of variable, we get
			\[\begin{gathered}
				\begin{split}
					\fint_{y-h}^{y} \fint_x^{x+h} \psi\left(x,\frac{y-x}{h}\right) dz dx
						= & \int_0^1\int_0^1\psi(y - h r,r) dq dr,
				\end{split} \\
				\begin{split}
					\fint_{y-h}^{y} \fint_x^{x+h} \psi\left(x,\frac{z-x}{h}\right) dz dx
					= & \fint_{y-h}^{y} \int_0^1 \psi(x,r) dr dx \\
					= & \int_0^1 \int_0^1 \psi(y-hq,r) dq dr,
				\end{split}
			\end{gathered}\]
		hence
			\[\begin{split}
				\frac{1}{h} \fint_{y-h}^{y} \fint_x^{x+h} 
						& \left[\psi\left(x,\frac{y-x}{h}\right) - \psi\left(x,\frac{z-x}{h}\right)\right] dz dx \\
					= & \int_0^1 \int_0^1 \frac{ \psi(y - hr, r) -  \psi(y - hq, r) }{h} dq dr \\
					= & -\int_0^1 \int_0^1 \int_q^r \partial_1 \psi(y - hs, r) ds dq dr \\
					= & -\int_0^1 \int_0^1 (r-q) \fint_q^r \partial_1 \psi(y-h s,r) ds dq dr.
			\end{split}\]
		Being $\psi$ smooth, we have that
		$\partial_1\psi(y - hs,r) = \partial_1\psi(y,r)+O(h)$ as $h\searrow 0$, uniformly for $s\in [0,1]$.
		Consequently,
			\[
				\frac{1}{h} \fint_{y-h}^{y} \fint_x^{x+h} 
						 \left[\psi\left(x,\frac{y-x}{h}\right) - \psi\left(x,\frac{z-x}{h}\right)\right] dz dx 
						 = -  \int_0^1 \left(r-\frac{1}{2}\right) \partial_1\psi(y,r) dr + O(h).
			\]
		Plugging this equality in \eqref{eq:psi} yields
			\[
				\int_\R\fint_x^{x+h} \frac{u_h(y) - \fint_{x}^{x+h} u_h}{h} \psi\left(x,\frac{y-x}{h}\right) dy dx
							=  - \int_\R u_h(y)
									\int_0^1 \left(r-\frac{1}{2}\right) \partial_1\psi(y,r) dr + O(h).
			\]
		It is possible to take the limit $h\searrow 0$ in the previous formula,
		since $u_h \to u$ in $L^2(\R)$. We then get
			\begin{equation}\label{eq:1st}
				\lim_{h\searrow 0} 
						\int_\R\fint_x^{x+h} \frac{u_h(y) - \fint_{x}^{x+h} u_h}{h} \psi\left(x,\frac{y-x}{h}\right) dy dx 
						=- \int_\R \int_0^1 u(y) (r-\tfrac{1}{2}) \partial_1\psi(y,r) dr dy.
			\end{equation}
			
		Now, we turn to the second addendum on the right-hand side of \eqref{eq:psi-estimate}.
		By Fubini's Theorem and a change of variables, we have
			\begin{equation*}
				\begin{split}
					\int_\R\fint_x^{x+h} \frac{\psi(x,\frac{y-x}{h})^2}{\lambda_h(x,y)} dy dx 
							=
								\int_\R \int_{0}^{1} \frac{\psi\left(y - hr, r\right)^2}{\lambda_h(y - hr, y)} dr dy.
				\end{split}
			\end{equation*}
		The function $\psi$ has compact support and $\lambda_h\geq \gamma /2$ for all $h>0$,
		therefore we can apply Lebesgue's Convergence Theorem
		to let $h\searrow 0$ in the previous expression, and
		we get
			\begin{multline*}
				\lim_{h\searrow 0} \int_\R \int_{0}^{1}
							\frac{\psi\left(y - hr, r\right)^2}{\int_0^1 (1-\theta) f''\left((1-\theta)\fint_{y - hr}^{y +(1-r)h}u_h(z)dz+\theta u_h(y)\right)d\theta} dr dy \\				 
					= \int_\R \int_0^1 \frac{\psi(y,r)^2}{\int_0^1 (1-\theta) f''(u(y))d\theta} dr dy,
			\end{multline*}
		thus
			\begin{equation}\label{eq:2nd}
				\lim_{h\searrow 0} \int_\R\fint_x^{x+h} \frac{\psi(x,\frac{y-x}{h})^2}{\lambda_h(x,y)} dy dx = 
					2\int_\R \int_0^1 \frac{\psi(y,r)^2}{ f''(u(y))} dr dy.
			\end{equation}		
		Summing up, by \eqref{eq:1st} and \eqref{eq:2nd},
		we deduce
			\begin{equation}\label{eq:liminf-Eh}
			\liminf_{h \searrow 0} E_h(u_h) \geq 
						- \int_\R \int_0^1 \left[
								u(y) \left(r-\frac{1}{2}\right) \partial_1\psi(y,r) dr dy
									+ \frac{1}{2} \int_\R \int_0^1 \frac{\psi(y,r)^2}{ f''(u(y))}
							\right] dr dy,
			\end{equation}
		for all $\psi \in C^\infty_c (\R^2)$.
		
		We can reach the conclusion from the last inequality by a suitable choice of the test function $\psi$.
		To see this, we let $\eta\in C^\infty_c(\R)$ and we choose a standard sequence of mollifiers $\set{\rho_k}$.
		We then set
			\[
				\psi(x,y) = \psi_k(x,y) \coloneqq \eta(x) \left(\zeta_k(y)- \frac{1}{2}\right),
				\quad\text{with } \zeta_k(y)\coloneqq \int_{\R} \rho_k(z-y) z dz,
			\]
		so that \eqref{eq:liminf-Eh} reads
			\[
			\begin{split}
				\liminf_{h \searrow 0} E_h(u_h) \geq &
				- \int_0^1 \left(r-\frac{1}{2}\right)\left(\zeta_k(r)- \frac{1}{2}\right) dr
					\int_\R  u(y) \eta'(y) dy \\
				& - \frac{1}{2} \int_0^1 \left(\zeta_k(r)- \frac{1}{2}\right)^2 dr
				 \int_\R \frac{\eta(y)^2}{ f''(u(y))} dy.
			\end{split}
			\]
		Because of the identity $\int_0^1 (r -1/2)^2 dr = 1/12$, letting $k\to +\infty$ yields
			\[
			\begin{split}
			\liminf_{h \searrow 0} E_h(u_h) \geq 
						& - \frac{1}{12} \left[ \int_\R u(y) \eta'(y) dy
								+ \frac{1}{2} \int_\R \frac{\eta(y)^2}{ f''(u(y))} dy \right] \\
					= &  \frac{1}{12} \left[ \int_\R u'(y) \eta(y) dy
					- \frac{1}{2} \int_\R \frac{\eta(y)^2}{ f''(u(y))} dy \right],
			\end{split}
			\]
		where $u'\in L^2(\R)$ is the distributional derivative of $u$,
		which exists since $u\in H^1(\R)$.		
		Recall that, in the previous formula, the test function $\eta$ is arbitrary,
		thus, to recover \eqref{eq:1D-Gliminf}, it suffices to take the supremum w.r.t. $\eta \in C^\infty_c(\R)$.
	\end{proof}

        \comment{}


\section{$\Gamma$-limit in arbitrary dimension}\label{sec:arb}
Let us fix the assumptions and the notation
that we use in the current section.  
We consider an open, bounded set $\Omega\subset\Rd$ with Lipschitz boundary
and a measurable function $K\colon \Rd \to [0,+\infty)$ such that
	\begin{equation}\label{eq:K}
		\int_{\Rd} K(z)\left( 1 + \left| z \right|^2 \right) dz < +\infty.
	\end{equation}
We require that $K(z) = K(-z)$ for a.e. $z\in \Rd$
and that the support of $K$
contains a sufficiently large annulus centered at the origin.
More precisely, let us set
	\begin{equation}\label{eq:sigma}
		\sigma_d \coloneqq
			\begin{cases}
			1												& \text{when } d=2, \\
			\displaystyle{\frac{d-2}{d-1}}	& \text{when } d>2;
			\end{cases}
	\end{equation}
we suppose that there exist $r_0\geq 0$ and $r_1>0$ such that
$r_0 < \sigma_d r_1$ and
	\begin{equation}\label{eq:Kpos}
	\mathrm{ess} \inf \set{ K(z) : z \in B(0,r_1)\setminus B(0,r_0) } > 0.
	\end{equation}
The simplest case for which \eqref{eq:Kpos} holds is  
	when there exists $k>0$ such that $K(z) \geq k$ for all $z\in B(0,r_1)$.
Finally, we let $f\colon [0,+\infty) \to [0,+\infty)$ be a $C^2$ function
such that $f(0) = f'(0) = 0$ and the strong convexity condition \eqref{eq:f-conv} is satisfied.

For  $u\in H^1(\Rd)$, we define the functionals
		\[\begin{gathered}
			\F_h(u)\coloneqq \int_{\Rd}\int_{\Rd}
			K_h(y-x) f \left(\frac{\left| u(y)-u(x) \right|}{\left| y-x \right|}\right) dydx, \\
			\F_0(u)\coloneqq \int_{\Rd}\int_{\Rd} K(z) f( |\nabla u(x)\cdot\hat{z}| ) dz dx,
		\end{gathered}\]
where $\hat{z}\coloneqq z \slash \left| z \right|$ for $z\neq 0$ and $K_h(z)\coloneqq h^{-d}K(z/h)$.		

\begin{rmk}
	By appealing to the results in \cite{P},
	one can show that
	$\F_h(u)$ tends to $\F_0(u)$ as $h\searrow 0$
	when $u$ is smooth enough and vanishes in $\Rd \setminus \Omega$,
	and also that $\F_0$ is the $\Gamma$-limit of the family $\set{\F_h}$.
	Indeed, if $u=0$ a.e. in $\Rd \setminus \Omega$, we have
	\[\begin{split}
		\F_h(u) & \coloneqq
			\int_{\Omega}\int_{\Omega}
				K_h(y-x) f \left(\frac{\left| u(y)-u(x) \right|}{\left| y-x \right|}\right) dydx \\
			\qquad & + 2 \int_{\Omega}\int_{\Rd \setminus \Omega}
				K_h(y-x) f \left(\frac{\left| u(x) \right|}{\left| y-x \right|}\right) dydx.
	\end{split}\]
By \cite{P}, we know that
the first addendum on the right-hand side
converges and $\Gamma$-converges to $\F_0$.
It is clear that this is also the $\Gamma$-inferior limit of $\set{\F_h}$,
because the term
	\[
	\tilde \F_h(u) \coloneqq 2 \int_{\Omega}\int_{\Rd \setminus \Omega}
	K_h(y-x) f \left(\frac{\left| u(x) \right|}{\left| y-x \right|}\right) dydx
	\]
is positive and may be dropped.
As for the pointwise limit,
we pick a function $u\in C^1(\Rd)$
that equals $0$ in $\Rd\setminus\Omega$,
and we observe that
the quotient $\left| u(x) \right| / \left| y - x \right|$ is bounded above by $\left\Vert u \right\Vert_{C^1(\Omega)}$.
It follows that
	\[
		\tilde F_h(u) \leq c \int_{\Omega}\int_{\Rd \setminus \Omega}
		K_h(y-x) dy dx,
	\]
with $c\coloneqq 2\left\Vert f \right\Vert_{L^\infty ([0,\left\Vert u \right\Vert_{C^1(\Rd)}])}$,
whence $\lim_{h\searrow 0} \tilde F_h(u) = 0$
(recall that $K\in L^1(\Rd)$).
\end{rmk}

Analogously to the $1$-dimensional case, we define
	\[
		\E_h (u)\coloneqq \frac{\F_0(u)-\F_h(u)}{h^2}
	\]
and we study the asymptotics of this family as $h \searrow 0$.
Notice that the functionals $\E_h$ are positive
(see Lemma \ref{stm:slicing} below).
Let us set
	\begin{equation}\label{eqX}
			X \coloneqq \set{ u\in H^1(\Rd) : u=0 \text{ a.e. in } \Rd \setminus \Omega }
	\end{equation}
and
	\begin{equation}\label{eq:E0-d}
		\E_0(u) \coloneqq
			\begin{cases}
			\displaystyle{
				\frac{1}{24}\int_{\Rd} \int_{\Rd}
				K(z) \left| z \right|^2	f''( \left| \nabla u(x) \cdot \hat z \right| ) 
				\left| \nabla^2 u(x)\hat{z}\cdot \hat{z} \right|^2
				dz dx}  \hfill \\
				\hfill \text{if } u \in X\cap H^2(\Rd),
			\\
			+\infty \hfill \text{otherwise}.					
			\end{cases}
	\end{equation}
We observe that
if $u\in X\cap H^2(\Rd)\cap W^{1,\infty}(\Rd)$
or if $f''$ has quadratic growth at infinity and $u\in X\cap H^2(\Rd)$,
then $\E_0(u)$ is finite.
	
	\begin{rmk}[Radial case]
		When $K$ is radial, that is
		$K(z) = \bar K( \left| z \right| )$ for some $\bar K\colon [0,+\infty) \to [0,+\infty)$,
		we have
			\[\begin{gathered}
				\F_0 (u) = \lVert K \rVert_{L^1(\Rd)} \int_{\Rd} \fint_{\mathbb{S}^{d-1}}
										f( \left| \nabla u(x)\cdot e \right| ) d\H^{d-1}(e) dx, \\
				\E_0 (u) = \frac{1}{24} \left( \int_{\Rd} K(z) \left| z \right|^2 dz \right)
								\int_{\Rd} \fint_{\mathbb{S}^{d-1}}
					 				f''( \left| \nabla u(x) \cdot e \right| ) 
									\left| \nabla^2 u(x)e\cdot e \right|^2
								d\H^{d-1}(e) dx.
			\end{gathered}\]
	\end{rmk}
	
This Section is devoted to the proof of the following:
	
	\begin{thm}\label{stm:Gconv}
		Let $\Omega$, $K$, and $f$ satisfy the assumptions stated at the beginning of the current section.
		Then, there hold:
		\begin{enumerate}
		\item\label{stm:cptgen} For any family $\set{u_h}\subset X$ such that $\E_h(u_h)\le M$ for some $M>0$,
		there exists a subsequence  $\set{u_{h_\ell}}$ and a function $u\in X \cap H^2(\R^d)$
		such that $\nabla u_{h_\ell}\to \nabla u$ in $L^2(\Rd)$.
			\item\label{stm:Gliminf} For any family $\set{u_h}\subset X$
				that converges to $u\in X$ in $H^1(\Rd)$
					\[	\E_0(u)\leq \liminf_{h\searrow 0} \E_h(u_h). \]
		\end{enumerate}
		\begin{enumerate}[label=(3\alph*)]
			\item\label{stm:Glimsup-a} For any $u\in X\cap W^{1,\infty}(\Rd)$
			there exists a family $\set{u_h}\subset X$
			that converges to $u$ in $H^1(\Rd)$ with the property that
			\[	\limsup_{h\searrow 0} \E_h(u_h) \leq \E_0(u). \]
			\item\label{stm:Glimsup-b} If $f''$ is bounded,
			for any $u\in X$ there exists a family $\set{u_h}\subset X$
			that converges to $u$ in $H^1(\Rd)$ with the property that
			\[	\limsup_{h\searrow 0} \E_h(u_h) \leq \E_0(u). \]
		\end{enumerate}
	\end{thm}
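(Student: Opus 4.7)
My plan is to reduce each assertion of Theorem~\ref{stm:Gconv} to the corresponding one‑dimensional result of Section~\ref{sec:1D} by means of a slicing identity. For $u\in X$, writing $\zeta=z/h$ in the definition of $\F_h$ and then passing to spherical coordinates $\zeta=\rho\hat z$ and decomposing $x=y+s\hat z$ with $y\in\hat z^\perp$, Fubini's theorem yields
\[
\E_h(u) \;=\; \int_{\Rd} K(\zeta)\,|\zeta|^2 \int_{\hat\zeta^\perp} E_{h|\zeta|}\bigl(w_{y,\hat\zeta}\bigr)\, d\H^{d-1}(y)\,d\zeta,
\]
where $w_{y,\hat\zeta}(s) := \nabla u(y+s\hat\zeta)\cdot\hat\zeta$ is the tangential derivative of $u$ along the line through $y$ in direction $\hat\zeta$ and $E_t$ is the 1D functional of Section~\ref{sec:1D}. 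The positivity of $\E_h$ announced in the introduction to this section is an immediate consequence, since $E_t\ge 0$ by Jensen.

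For the compactness statement~\ref{stm:cptgen}, I would combine the slicing identity with the 1D lower bound~\eqref{eq:1D-lbound2}: if $\E_h(u_h)\le M$, restricting the outer integral to the annulus $B(0,r_1)\setminus B(0,r_0)$ on which $K$ is bounded below by~\eqref{eq:Kpos} gives a uniform $L^2$-control of the discrete second‑order difference quotient of $u_h$ in the direction $\hat\zeta$ at scale $h|\zeta|$, for $\hat\zeta$ ranging over essentially all of $S^{d-1}$. Mimicking the mollification argument of Lemma~\ref{stm:1D-cpt} slice by slice and then integrating in $y$ and $\hat\zeta$ should produce a subsequence along which $\nabla u_{h_\ell}\to\nabla u$ strongly in $L^2(\Rd)$ with $\partial^2_{\hat\zeta\hat\zeta}u\in L^2(\Rd)$ for sufficiently many $\hat\zeta$; the geometric assumption~\eqref{eq:Kpos} with the precise constant $\sigma_d$ from~\eqref{eq:sigma} is what quantitatively allows one to average these directional second derivatives over $\hat\zeta$ (via polarization of the associated quadratic form) and recover the full Hessian, thereby forcing $u\in X\cap H^2(\Rd)$.

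For the liminf~\ref{stm:Gliminf}, once $u\in X\cap H^2$ is secured and $\nabla u_h\to\nabla u$ in $L^2(\Rd)$, Fubini provides, along a further subsequence, slice‑wise $L^2$-convergence $w_{y,\hat\zeta}(u_h)\to w_{y,\hat\zeta}(u)$ for a.e.~$\hat\zeta\in S^{d-1}$ and a.e.~$y\in\hat\zeta^\perp$. Applying Proposition~\ref{stm:1D-Gliminf} slice by slice yields
\[
\liminf_{h\searrow 0} E_{h|\zeta|}\bigl(w_{y,\hat\zeta}(u_h)\bigr) \;\ge\; \frac{1}{24}\int_\R f''(|w_{y,\hat\zeta}|)\,|w_{y,\hat\zeta}'|^2\,ds,
\]
and two consecutive applications of Fatou's lemma (first in $y$, then in $\zeta$), combined with the pointwise identity $w_{y,\hat\zeta}'(s)=\nabla^2 u(y+s\hat\zeta)\hat\zeta\cdot\hat\zeta$ and a final use of Fubini, deliver $\liminf_h\E_h(u_h)\ge\E_0(u)$.

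For the upper bounds~\ref{stm:Glimsup-a} and~\ref{stm:Glimsup-b}, I would first handle $u\in C^\infty_c(\Omega)$ with the constant sequence $u_h\equiv u$: each slice $w_{y,\hat\zeta}$ is $C^2$ and compactly supported in a single interval, so Proposition~\ref{stm:1D-pointlim} applies pointwise in $(y,\hat\zeta)$ and its quantitative estimate~\eqref{eq:1D-point-est}, together with the second‑moment assumption~\eqref{eq:K}, provides an integrable dominant ensuring $\E_h(u)\to\E_0(u)$ by Lebesgue's theorem. Part~\ref{stm:Glimsup-a} then follows by approximating any $u\in X\cap W^{1,\infty}(\Rd)$ with $\E_0(u)<\infty$ (hence $u\in H^2$) by $C^\infty_c(\Omega)$ functions in $H^2$ with uniformly bounded gradient, using inward rescaling (thanks to the Lipschitz regularity of $\partial\Omega$) followed by mollification, and invoking dominated convergence on the $f''$-factor together with a diagonal argument. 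Part~\ref{stm:Glimsup-b} proceeds identically after noting that the slicing identity and Remark~\ref{rmk:f''bounded} together yield the estimate $\E_h(u)\le C\|u\|_{H^2(\Rd)}^2$, which extends the density argument to arbitrary $u\in X\cap H^2$. The step I expect to be most delicate is the compactness~\ref{stm:cptgen}: the slice‑wise mollification must be organized so that strong $L^2(\Rd)$-convergence of $\nabla u_h$ genuinely emerges after integration, and the quantitative reconstruction of $\nabla^2 u$ from the directional traces made available by~\eqref{eq:Kpos} has to be made effective — this is precisely what the choice of $\sigma_d$ is designed to enable.
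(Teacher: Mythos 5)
Your slicing identity, the liminf argument (Fatou in $(z,\xi)$ after extracting slice-wise $L^2$ convergence of $w'_{\hat z,\xi}$ and applying Proposition~\ref{stm:1D-Gliminf}), and the limsup argument (constant recovery sequence for smooth $u$ via the quantitative estimate \eqref{eq:1D-point-est} dominated by the second moment of $K$, then density under the boundedness of $f''$ on the relevant range) all coincide with the paper's route through Lemma~\ref{stm:slicing} and Proposition~\ref{stm:intermediate}. The problem is the compactness step \ref{stm:cptgen}, which you correctly flag as the delicate point but do not actually prove, and the strategy you sketch for it would fail as stated. Mollifying slice by slice in the direction $\hat\zeta$ and integrating only yields a uniform $L^2(\Rd)$ bound on the derivative of (a regularization of) $\nabla u_h\cdot\hat\zeta$ \emph{in the direction $\hat\zeta$ itself}; for each fixed direction this is not an $H^1(\Rd)$ bound, and slice-wise relative compactness on individual lines does not integrate to relative compactness in $L^2(\Rd)$ (the extracted subsequence depends on the slice, and no equi-integrability across slices is available). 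More fundamentally, the energy lower bound only controls the \emph{longitudinal} quantity $\big(\nabla u_h(x+hz)-\nabla u_h(x)\big)\cdot\hat z$, whereas proving that $\nabla u_h$ is $L^2$-close to a smoothing $\rho_h\ast\nabla u_h$ requires controlling the full vector difference $\nabla u_h(x+y)-\nabla u_h(x)$, including its components transverse to $y$. Your appeal to ``polarization of the associated quadratic form'' addresses only the identification of the limit as an $H^2$ function, not this convergence issue.

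The paper resolves this with a genuinely $d$-dimensional construction that your proposal is missing: it introduces the effective kernel $\tilde K$ of \eqref{eq:tildeK}, proves it is bounded below on the ball $B(0,\sigma_d r_1)$ (Lemma~\ref{stm:tildeK} -- this is the actual role of $\sigma_d$, not the polarization you suggest), and mollifies with a \emph{radial} kernel dominated by $\tilde K$. Radiality makes $\nabla\rho_h(y)$ parallel to $\hat y$, so $\Delta v_h$ involves only the longitudinal differences controlled by Lemma~\ref{stm:lbound}, and the identity $\int|\nabla^2 v_h|^2=\int|\Delta v_h|^2$ for compactly supported $v_h$ recovers the full Hessian bound. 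The closeness $\|\nabla v_h-\nabla u_h\|_{L^2}=O(h)$ is then obtained by splitting $|p|^2$ into the component along $y$ and the components along $\eta\in\hat y^\perp$ (formula \eqref{eq:proj}), and rewriting each transverse term, via the algebraic decomposition of $p_h(x,y)\cdot\eta$, as a sum of longitudinal differences along combined displacements $h(y\pm\eta)$ -- which is precisely why the supports of $\rho$ and $\pi$ are taken inside $B(0,2^{-1/2}\sigma_d r_1)$, so that $|y\pm\eta|<\sigma_d r_1$ and each term is again controlled by $\tilde K$. Without this mechanism (or a substitute for it), your compactness argument, and hence statements \ref{stm:cptgen} and \ref{stm:Gliminf} for general energy-bounded families, remains unproved.
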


Statements \ref{stm:Gliminf}, \ref{stm:Glimsup-a}, and \ref{stm:Glimsup-b} amounts to saying that 
$\E_0$ is the $\Gamma$-limit of $\set{\E_h}$
with respect to the $H^1(\Rd)$-convergence
if either we restrict to functions in $X	\cap W^{1,\infty}(\Rd)$  or
$f''$ is bounded.

\subsection{Slicing}
When the dimension is $1$,
by virtue of the analysis in Section \ref{sec:1D},
it is not difficult to derive the $\Gamma$-convergence of the functionals $\E_h$.
	
	\begin{cor}\label{stm:1D-cor} 
		Let $K\colon \R \to [0,+\infty)$ be an even function such that \eqref{eq:K} holds.
		For $h>0$ and $u\in H^1(\R)$, we define the family
			\[
				\E_h(u) \coloneqq \frac{1}{h^2}\int_{\R} \int_{\R}
							K_h(z) \left[
									f( \left| u'(x)\right| ) -  f \left(\left|\frac{u(x+z)-u(x)}{z}\right|\right)
								\right] dzdx.
			\]			
		We also let $\Omega = (a,b)$ be an open interval, $f\colon [0,+\infty) \to [0,+\infty)$ be a $C^2$ function satisfying
		$f(0) = f'(0) = 0$ and 
		$f''(t)\geq\gamma$ with $\gamma>0$ for all $t\in\R$,
		and $X\subset H^1(\R)$ be as in \eqref{eqX}.
		Then, the restrictions of the functionals $\E_h$ to $X$ $\Gamma$-converge
		w.r.t. the $H^1(\R)$-topology to
			\[
				\E_0(u) \coloneqq 
					\begin{cases}
						\begin{displaystyle}
						\frac{1}{24} \left(\int_{\R} K(z)z^2 dz\right) \int_\R f''(u'(x)) \left|u''(x)\right|^2 dx
						\end{displaystyle} 		&\text{if } u\in X \cap  H^2(\R),  \\
						+\infty 						&\text{otherwise}.
				\end{cases} 
			\]			
	\end{cor}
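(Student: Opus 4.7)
The plan is to reduce the corollary to Theorem~\ref{stm:1D-Gconv} by viewing $\E_h$ as a weighted integral of the Section~\ref{sec:1D} energies $E_h$ applied to $v=u'$. For $u \in X$, I set $v:=u'\in Y$ and $V(x):=\int_0^x v(y)\,dy$, so that $(u(x+z)-u(x))/z = D_z V(x)$ in the notation of \eqref{eq:U}. Extending $f$ as an even function, the inner integral over $x$ equals $z^2\,E_{|z|}(v)$, and after the substitution $z=hw$ one obtains
\[
\E_h(u) \;=\; \int_\R K(w)\,w^2\, E_{h|w|}(v)\,dw,
\]
from which $\E_0(u) = \big(\int_\R K(w)w^2\,dw\big)\, E_0(v)$ is the formal limit. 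Both asymptotic inequalities then follow by passing to the limit under the $w$-integral, with $K(w)w^2\in L^1(\R)$ (ensured by \eqref{eq:K}) providing the integrable weight.

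For the $\Gamma$-liminf, suppose $u_h\to u$ in $H^1(\R)$, so that $v_h:=u_h'\to v:=u'$ in $L^2(\R)$. For each fixed $w\neq 0$ the reparametrized parameter $h|w|$ still tends to $0$, so Theorem~\ref{stm:1D-Gconv}(1) applied to the sequence $\{v_h\}$ gives $\liminf_{h\searrow 0}E_{h|w|}(v_h)\ge E_0(v)$. Since the integrand $K(w)w^2 E_{h|w|}(v_h)$ is nonnegative, Fatou's Lemma combined with the identity above yields
\[
\liminf_{h\searrow 0}\E_h(u_h)\;\ge\; \int_\R K(w)\,w^2\, E_0(v)\,dw \;=\; \E_0(u).
\]

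For the $\Gamma$-limsup, the strategy is to first handle smooth compactly supported functions and then pass to the general case by density. If $u\in X\cap C_c^\infty(\Omega)$, I would take $u_h\equiv u$: then $v$ is smooth and compactly supported, and Proposition~\ref{stm:1D-pointlim} provides both pointwise convergence $E_{h|w|}(v)\to E_0(v)$ and a uniform bound $E_{h|w|}(v)\le C(v)$, so Dominated Convergence gives $\E_h(u)\to \E_0(u)$. For a general $u\in X\cap H^2(\R)$ I would approximate $u$ in $H^2(\R)$ by a sequence $u_\ell\in C_c^\infty(\Omega)$; this is possible because the one-dimensional Sobolev embedding $H^2(\R)\hookrightarrow C^1(\R)$ combined with $u\equiv 0$ outside $\Omega$ forces both $u$ and $u'$ to vanish on $\partial\Omega$, placing $u|_\Omega$ in $H^2_0(\Omega)$. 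The continuity $\E_0(u_\ell)\to \E_0(u)$ follows from uniform convergence $u_\ell'\to u'$ (by $H^1(\R)\hookrightarrow L^\infty(\R)$ in one dimension) and $L^2$-convergence of $u_\ell''$, and a standard diagonal extraction yields the recovery sequence. The main technical point is this density step; once it is granted, the remainder of the argument is a clean consequence of the $1$-dimensional analysis already established in Section~\ref{sec:1D}.
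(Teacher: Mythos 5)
Your proposal is correct and follows essentially the same route as the paper: a change of variables identifies the inner integral with $z^2 E_{h|z|}(u')$, after which the $\Gamma$-liminf is obtained from Proposition~\ref{stm:1D-Gliminf} together with Fatou's Lemma, and the limsup from Proposition~\ref{stm:1D-pointlim} together with dominated convergence and an $H^2$-density argument (the paper compresses all of this into "a straightforward adaptation of the proof of Theorem~\ref{stm:1D-Gconv}", carried out in detail in Proposition~\ref{stm:intermediate}). Your observations that the even extension $t\mapsto f(|t|)$ preserves the $C^2$ and strong-convexity hypotheses, and that $u\in X\cap H^2(\R)$ lies in $H^2_0(\Omega)$ by the one-dimensional Sobolev embedding, are exactly the points the paper leaves implicit.
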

	
	\begin{proof}
		A change of variables gives
			\[
				\E_h(u) = \int_{\R} K(z) z^2
								\left[	\frac{1}{(hz)^2} \int_{\R}
												f( \left| u'(x)\right| ) -  f \left(\left|\frac{u(x+hz)-u(x)}{hz}\right|\right) dx
								\right] dz.
			\]
		Recalling \eqref{eq:Eh},
		we notice that the quantity between square brackets is equal to $E_{hz}(u')$,
		therefore the conclusion follows by a straightforward adaptation of the proof of Theorem \ref{stm:1D-Gconv}
		(see also the proof of Proposition \ref{stm:intermediate}).
	\end{proof}

Corollary \ref{stm:1D-cor} concludes the analysis when $d=1$,
so we may henceforth assume that $d\geq 2$.
Our aim is proving that
the restrictions to $X$  of the functionals $\E_h$ $\Gamma$-converge
w.r.t. the $H^1(\Rd)$-topology to $\E_0$.
The gist of our proof is a slicing procedure,
which amounts to express the $d$-dimensional  energies $\E_h$ 
as superpositions of the $1$-dimensional energies $E_h$,
regarded as functionals on each line of $\Rd$.

Hereafter we tacitly assume that 
$\Omega$, $K$, and $f$ satisfy the hypotheses made at the beginning of the section.
When $z\in \Rd\setminus\set{0}$, we set
	\[
	\hat{z}^\perp \coloneqq \Set{ \xi \in \Rd : \xi \cdot \hat{z} = 0}.
	\]

\begin{lemma}[Slicing] \label{stm:slicing}
	For $u\in X$, $z\in \Rd\setminus\set{0}$, and $\xi \in \hat{z}^\perp$,
	we define $w_{\hat z,\xi}\colon \R \to \R$ as $w_{\hat z,\xi}(t)\coloneqq u(\xi + t\hat{z})$.
	Then, $w'_{\hat z,\xi}(t) = \nabla u(\xi + t\hat{z}) \cdot \hat{z}$ and
	\begin{equation}\label{eq:slicing}
	\E_h(u) = \int_{\Rd}\int_{z^\perp} K(z) \left| z \right|^2  E_{ h\left| z \right| } (w'_{\hat z,\xi}) d \H^{d-1}(\xi) dz,
	\end{equation}
	where $E_{h \left| z \right|}$ is as in \eqref{eq:Eh}
	(note that the function $f$ in \eqref{eq:Eh} must be replaced here
	by $f(\left| t \right|)$).
\end{lemma}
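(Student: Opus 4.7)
The pointwise identity $w'_{\hat z,\xi}(t)=\nabla u(\xi+t\hat z)\cdot\hat z$ is the standard chain rule from the slicing theory of Sobolev functions: for $u\in H^1(\Rd)$ and for $\H^{d-1}$-a.e. $\xi\in\hat z^\perp$, the restriction $w_{\hat z,\xi}$ lies in $H^1_{\mathrm{loc}}(\R)$ with the stated derivative. I would invoke this without reproving it.

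For the integral identity, my plan is first to recast $\E_h$ so that only the kernel $K(z)$ (and not $K_h$) appears. Making the change of variables $z\mapsto hz$ inside $\F_h$ yields
\[
\E_h(u)=\frac{1}{h^2}\int_{\Rd}\int_{\Rd} K(z)\left[f(|\nabla u(x)\cdot\hat z|)-f\left(\frac{|u(x+hz)-u(x)|}{h|z|}\right)\right] dx\, dz,
\]
putting both contributions under the same kernel and exposing a one-dimensional structure in the direction $\hat z$.

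Next, for each fixed $z\neq 0$ I would slice $\Rd$ along $\hat z$, writing $x=\xi+t\hat z$ with $\xi\in\hat z^\perp$ and $t\in\R$, so that $dx=dt\,d\H^{d-1}(\xi)$. Setting $H\coloneqq h|z|$ and $w\coloneqq w_{\hat z,\xi}$, the relations $u(x)=w(t)$, $u(x+hz)=w(t+H)$, and $\nabla u(x)\cdot\hat z=w'(t)$ turn the bracketed integrand into
\[
f(|w'(t)|)-f\left(\left|\frac{w(t+H)-w(t)}{H}\right|\right).
\]
This is precisely the integrand of \eqref{eq:Eh} (with $f$ replaced by $f(|\cdot|)$) evaluated at $v=w'$, using $w$ itself as the primitive (any constant of integration disappears in $D_H V$). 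Integrating in $t$ therefore produces $H^2 E_H(w')=h^2|z|^2 E_{h|z|}(w'_{\hat z,\xi})$, and substituting back cancels the prefactor $1/h^2$ to give \eqref{eq:slicing}.

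The exchange in the order of integration is justified by Tonelli's theorem, the integrand being pointwise nonnegative (as noted after \eqref{eq:Eh}, this is a consequence of Jensen's inequality applied to $f$). Measurability of $\xi\mapsto E_{h|z|}(w'_{\hat z,\xi})$ is routine. The only bookkeeping point is that \eqref{eq:Eh} was introduced for functions in the space $Y$ attached to a fixed interval $I$; since $u\in X$ vanishes outside $\Omega$, each slice $w_{\hat z,\xi}$ has compact support inside the projection of $\Omega$ onto the line $\xi+\R\hat z$, so the definition of $E_{h|z|}$ extends verbatim. I do not anticipate a substantial obstacle beyond this bookkeeping; the real content of the lemma is the trivial algebraic observation that, once the change of variable $z\mapsto hz$ has been performed, the step size in the difference quotient on each slice is exactly $h|z|$, which matches the scaling $H^2$ in \eqref{eq:Eh}.
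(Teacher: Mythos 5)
Your argument is correct and follows essentially the same route as the paper: rescale $K_h$ to $K$ via $y=x+hz$, decompose $x=\xi+t\hat z$ by Fubini, and absorb the factor $\left|z\right|^2$ to recognize $E_{h\left|z\right|}(w'_{\hat z,\xi})$. One small slip in your justification: the combined integrand $f(|\nabla u(x)\cdot\hat z|)-f\bigl(|u(x+hz)-u(x)|/(h|z|)\bigr)$ is \emph{not} pointwise nonnegative (only its integral in $t$ is, by the Jensen argument after \eqref{eq:Eh}), so Tonelli should be applied to $\F_0$ and $\F_h$ separately --- each of which does have a nonnegative integrand --- which is what the paper implicitly does.
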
	
\begin{proof}
	Formula \eqref{eq:slicing} is an easy consequence of Fubini's Theorem.
	Indeed, once the direction $\hat{z}\in\mathbb{S}^{d-1}$ is fixed,
	we can write $x\in\Rd$ as $x = \xi + t\hat{z}$
	for some $\xi \in \Rd$ such that $\xi\cdot z = 0$ and $t\in\R$.
	Using this decomposition, we have
		\[\begin{split}
			\F_h(u) = &\int_{\Rd}\int_{\Rd} K(z) f\left(  \frac{ \left| u(x+hz)-u(x) \right| }{ h \left| z \right|}\right) dz dx \\
						= & \int_{\Rd}\int_{z^\perp} \int_{\R}
									K(z) f\left( 
										\frac{ \left| w_{\hat z,\xi}(t + h \left| z \right| ) - w_{\hat z,\xi}(t) \right| }{ h \left| z \right|}
									\right) dt  d\H^{d-1}(\xi) dz,
		\end{split}\]
	whence
	\[
	\E_h(u) = \frac{1}{h^2}\int_{\Rd}\int_{z^\perp}\int_{\R} K(z) \left[
	f\left( \left| w'_{\hat z,\xi}(t)\right| \right)
	- f\left( \frac{ \left| w_{\hat z,\xi}(t+h|z|) - w_{\hat z,\xi}(t) \right| }{h \left| z \right| }\right)
	\right] dt d\H^{d-1}(\xi) dz.	
	\]
	To obtain \eqref{eq:slicing},
	it now suffices to multiply and divide the integrands by $\left| z \right|^2$.
\end{proof}

The connection with the $1$-dimensional case provided by Lemma \ref{stm:slicing}
suggests that the $\Gamma$-convergence of the functionals $E_h$ might be exploited
to prove Theorem \ref{stm:Gconv}.
Though, to be able to apply the results of Section \ref{sec:1D},
we need the functions $w_{\hat z,\xi}$ in \eqref{eq:slicing}
to admit a second order weak derivative for a.e. $z$ and $\xi$.
This poses no real problem for the proof of the upper limit inequality,
because we may reason on regular functions;
as for the lower limit one, we shall tackle the difficulty in the next subsection
by means of a compactness criterion, see Lemma \ref{stm:cpt} below. 
For the moment being,
we are able to  establish the following:
	
	\begin{prop}\label{stm:intermediate}
		Let $u\in X\cap H^2(\R^d)$. Then:
		\begin{enumerate}
			\item For any family $\set{u_h}\subset X$
				that converges to $u$ in $H^1(\Rd)$, there holds
			\begin{equation*}
						\E_0(u)\le \liminf_{h\searrow 0} \E_h(u_h).
					\end{equation*}
			\item If $u \in X\cap C^3(\Rd)$, then
					\begin{equation*}
						 \E_0(u) = \lim_{h\searrow 0} \E_h(u).
					\end{equation*}
		\end{enumerate}		
	\end{prop}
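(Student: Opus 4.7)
The plan is to invoke the slicing identity of Lemma~\ref{stm:slicing} and reduce each assertion to a corresponding one-dimensional statement from Section~\ref{sec:1D}, passing to the limit by Fatou's lemma in~(1) and by dominated convergence in~(2).

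For~(1), I would assume without loss of generality that $\liminf_h \E_h(u_h) = \lim_h \E_h(u_h) < +\infty$, extracting a subsequence. Since $\nabla u_h \to \nabla u$ in $L^2(\Rd)$ and $K(z)|z|^2$ is integrable by \eqref{eq:K}, Fubini yields
\[
\int_{\Rd} K(z)|z|^2 \int_{\hat z^\perp} \bigl\lVert (u_h)'_{\hat z,\xi} - u'_{\hat z,\xi} \bigr\rVert_{L^2(\R)}^2 d\H^{d-1}(\xi) dz \leq \Bigl(\int K(z)|z|^2 dz\Bigr) \lVert \nabla u_h - \nabla u\rVert_{L^2(\Rd)}^2 \to 0.
\]
A further subsequence therefore realizes slicewise $L^2(\R)$-convergence $(u_h)'_{\hat z,\xi} \to u'_{\hat z,\xi}$ for $K(z)|z|^2 d\H^{d-1}(\xi)dz$-a.e.\ $(z,\xi)$. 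Since every such slice has support contained (modulo a translation in $t$, under which the $1$-dimensional energies are invariant) in a fixed interval of length at most $\mathrm{diam}(\Omega)$, Proposition~\ref{stm:1D-Gliminf} applies to give
\[
\liminf_{h\searrow 0} E_{h|z|}\bigl((u_h)'_{\hat z,\xi}\bigr) \geq E_0\bigl(u'_{\hat z,\xi}\bigr)
\]
for a.e.\ $(z,\xi)$. Plugging this into \eqref{eq:slicing}, applying Fatou's lemma with respect to $K(z)|z|^2 d\H^{d-1}(\xi)dz$, and using Fubini to rewrite the result gives $\liminf_h \E_h(u_h) \geq \E_0(u)$.

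For~(2), let $u\in X\cap C^3(\Rd)$. Each slice $w_{\hat z,\xi}$ is $C^3$ on $\R$ and vanishes outside a translate of $(-R,R)$ with $R \coloneqq \mathrm{diam}(\Omega)$, uniformly in $(\hat z,\xi)$, so that $w'_{\hat z,\xi}$ is $C^2$ with $\lVert w'_{\hat z,\xi}\rVert_{C^2(\R)} \leq \lVert u\rVert_{C^3(\Rd)}$. Proposition~\ref{stm:1D-pointlim} then yields a constant $c$ and a bounded modulus $m$, both \emph{uniform} in $(\hat z,\xi)$, such that
\[
\bigl| E_{h|z|}(w'_{\hat z,\xi}) - E_0(w'_{\hat z,\xi}) \bigr| \leq c\, m(h|z|).
\]
Moreover, $E_{h|z|}(w'_{\hat z,\xi})$ vanishes whenever the line $\xi + t\hat z$ misses $\Omega$, which confines the relevant $\xi$ to a subset of $\hat z^\perp$ of $\H^{d-1}$-measure bounded by a constant depending only on $R$. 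Hence $K(z)|z|^2 E_{h|z|}(w'_{\hat z,\xi})$ is dominated by a multiple of $K(z)|z|^2$ times the indicator of this slab, which is integrable thanks to~\eqref{eq:K}. Dominated convergence and Fubini conclude that $\E_h(u) \to \E_0(u)$.

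The chief technical point is extracting a \emph{single} subsequence giving slicewise $L^2$ convergence of the directional derivatives for a.e.\ parameter $(z,\xi)$, starting only from the joint $H^1(\Rd)$-convergence of $u_h$ to $u$; this is what forces the finite second moment assumption on $K$ to enter via the weight $K(z)|z|^2$ and is resolved by the $L^1$ integrability computation above. All remaining steps are direct invocations of the $1$-dimensional results of Section~\ref{sec:1D}.
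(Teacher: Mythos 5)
Your proposal is correct and follows essentially the same route as the paper: the slicing identity of Lemma~\ref{stm:slicing} combined with Fatou's lemma and a slicewise a.e.\ $L^2$ subsequence extraction (the paper phrases this via an auxiliary unit-mass kernel $\rho$, you via the weight $K(z)|z|^2$ itself, which is equivalent) to invoke Proposition~\ref{stm:1D-Gliminf} for part~(1), and the uniform modulus from Proposition~\ref{stm:1D-pointlim} together with dominated convergence for part~(2). Your explicit remark that the relevant $\xi$ are confined to a set of finite $\H^{d-1}$-measure is a point the paper treats more tersely, but the argument is the same.
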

	
	\begin{proof}
		We prove both the assertions by using the slicing formula \eqref{eq:slicing}.
		\begin{enumerate}
			\item For all $h>0$, $z\in\Rd\setminus\set{0}$, and $\xi \in \hat{z}^\perp$,
				we let $w_{h; \hat z,\xi}\colon \R \to \R$ be defined
				as $w_{h; \hat z,\xi}(t)\coloneqq u_h( \xi + t\hat{z} )$.				
				Then,
					\[
		\E_h(u_h) = \int_{\Rd}\int_{z^\perp} K(z) \left| z \right|^2  E_{ h\left| z \right| } (w'_{h; \hat z,\xi}) d\H^{d-1}(\xi)dz,
					\]
				and, by Fatou's Lemma,
					\begin{equation}\label{eq:liminf-sl}
						\liminf_{h\searrow 0} \E_h(u_h) 
								\geq \int_{\Rd}\int_{z^\perp} K(z) \left| z \right|^2 
						\big[ \liminf_{h\searrow 0} E_{ h\left| z \right| } (w'_{h; \hat z,\xi}) \big] d\H^{d-1}(\xi) dz.
					\end{equation}
				
				Let $ w_{\hat z,\xi}$ be as in Lemma \ref{stm:slicing}.				
				Note that for any kernel $\rho\colon \Rd \to [0,+\infty)$ such that
				$\lVert \rho \rVert_{L^1(\Rd)} = 1$ we may write
					\[\begin{split}
						\int_{\Rd} \left| \nabla u_h - \nabla u  \right|^2
							& \geq \int_{\Rd} \rho( z )
							\int_{\hat{z}^\perp} \int_{\R} 
								\left| \big(\nabla u_h(\xi + t\hat{z}) - \nabla u(\xi + t\hat{z}) \big) \cdot \hat{z}\right|^2
								dt d\H^{d-1}(\xi) dz \\
							& = \int_{\Rd} \rho( z )
									\int_{\hat{z}^\perp} \int_{\R}  \left| w'_{h; \hat z,\xi}(t) - w'_{\hat z,\xi}(t) \right|^2
									dt d\H^{d-1}(\xi) dz.
					\end{split}\]
				Since the left-hand side vanishes as $h\searrow 0$,
				it follows that there exists a subsequence of $\set{w'_{h; \hat z,\xi}}$, which we do not relabel,
				that converges in $L^2(\R)$ to $w'_{\hat z,\xi}$
				for $\mathcal{L}^d$-a.e. $z\in\Rd$ and $\H^{d-1}$-a.e. $\xi\in\hat{z}^\perp$.
				In particular, by assumption, $w'_{\hat z,\xi}\in H^1(\R)$ for a.e. $(z,\xi)$
				and it equals $0$ on the complement of some open interval $I_{\hat z,\xi}$.
				
				From the previous considerations, we see that
				Proposition \ref{stm:1D-Gliminf} can be applied
				on the right-hand side of \eqref{eq:liminf-sl}, yielding
					\[
						\liminf_{h\searrow 0} \E_h(u_h) 
							\geq \int_{\Rd}\int_{z^\perp} K(z) \left| z \right|^2 E_0 (w'_{\hat z,\xi}) d\H^{d-1}(\xi) dz = \E_0(u).
					\]
			\item For any fixed $z\in\Rd\setminus \set{0}$ and $\xi\in\hat{z}^\perp$,
				we define the function $w_{\hat z,\xi}\in C^3(\R)$ as above.
				Since $\Omega$ is bounded, there exists $r>0$ such that,
				for any choice of $z$, $w'_{\hat z,\xi}(t) = \nabla u(\xi + t \hat{z}) \cdot \hat{z} = 0$
				whenever $\xi\in z^\perp$ satisfies $\left| \xi \right| \geq r$,
				while $w'_{\hat z,\xi}(t)$ is supported
				in an open interval $I_{\hat z,\xi}$ if $\left| \xi \right| < r$.
				
				By virtue of the slicing formula \eqref{eq:slicing}, we obtain
					\[
					\left| \E_h(u) - \E_0(u) \right|
							\leq \int_{\Rd}\int_{z^\perp} K(z) \left| z \right|^2  
										\left| E_{ h\left| z \right| } (w'_{\hat z,\xi}) - E_0(w'_{\hat z,\xi})\right|
									d \H^{d-1}(\xi) dz 
					\]
				Proposition \ref{stm:1D-pointlim} gives the existence of 
				a constant $c>0$ and of a continuous, bounded, and increasing function
				$m\colon [0,+\infty) \to [0,+\infty)$ such that $m(0)=0$ and
					\[
						\left| \E_h(u) - \E_0(u) \right|
							\leq  c \int_{\Rd}\int_{z^\perp}
								K(z) \left| z \right|^2 m(h \left| z \right|) d \H^{d-1}(\xi) dz.
					\]
				We remark that here $m$ can be chosen depending only on $\nabla u$,
				and not on $\hat z$ and $\xi$.
				
				Recalling \eqref{eq:K}, to achieve the conclusion
				it now suffices to appeal to Lebesgue's Convergence Theorem.			
		\end{enumerate}
	\end{proof}
	

\subsection{Lower bound, compactness, and proof of the main result}
Similarly to the $1$-dimensional case,
we shall prove the compactness of functions with equibounded energy
by establishing at first a lower bound on the functionals $\E_h$.
More precisely, Lemma \ref{stm:lbound} below shows that, when $f$ is strongly convex,
$\E_h(u)$ is greater than a double integral
which takes into account, for each $z\in \Rd\setminus\set{0}$,
the squared projection of the difference quotients of $\nabla u$ in the direction of $z$.
Thanks to the slicing formula, the inequality follows with no effort
by applying Lemma \ref{stm:1D-enbound} on each line of $\Rd$.

We point out that
our approach results in the appearance of an effective kernel $\tilde K$
in front of the difference quotients.
This function stands as a multidimensional counterpart
of the kernel $J$ in Lemma \ref{stm:1D-enbound};
actually, $\tilde K$ depends both on $K$ and on $J$ 
(see \eqref{eq:tildeK} for the precise definition).
In Lemma \ref{stm:tildeK}, we shall collect some properties of the effective kernel
that will be useful in the proof of Lemma \ref{stm:cpt}.


	\begin{lemma}[Lower bound on the energy]\label{stm:lbound}
		Let us set
			\begin{equation}\label{eq:tildeK}
			\tilde K(z) \coloneqq \int_{-1}^{1} J(r) K_{\left| r \right|}(z) dr
			\qquad\text{for a.e. } z\in\Rd,				
			\end{equation}
		with $J$ as in Lemma \ref{stm:1D-enbound}.
		Then, it holds
			\begin{equation}\label{eq:lbound}
				\E_h(u)\geq 
						\frac{\gamma}{4} \int_{\Rd}\int_{\Rd}
						 \tilde K(z) \left[ \frac{\big( \nabla u(x+ h z) - \nabla u(x)\big)\cdot\hat{z}}{h} \right]^2
						dx dz.
			\end{equation}			
	\end{lemma}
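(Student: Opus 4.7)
The plan is to combine the slicing formula of Lemma \ref{stm:slicing} with the one-dimensional estimate \eqref{eq:1D-lbound2}, then to repackage the resulting expression via Fubini's theorem and a double change of variables that produces the effective kernel $\tilde K$.

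First I would invoke \eqref{eq:slicing} to write $\E_h(u) = \int_{\Rd}\int_{\hat z^\perp} K(z)|z|^2 E_{h|z|}(w'_{\hat z,\xi})\, d\H^{d-1}(\xi)\, dz$. Since $u\in X$ vanishes outside the bounded set $\Omega$, for a.e.\ $z\in\Rd$ and $\H^{d-1}$-a.e.\ $\xi\in\hat z^\perp$ the function $w'_{\hat z,\xi}(t) = \nabla u(\xi + t\hat z)\cdot \hat z$ belongs to the space $Y$ associated to a suitable bounded interval, so Lemma \ref{stm:1D-enbound} applies with step $h|z|$ and yields
\[
E_{h|z|}(w'_{\hat z,\xi}) \geq \frac{\gamma}{4}\int_\R \int_{-h|z|}^{h|z|} J_{h|z|}(r)\left(\frac{w'_{\hat z,\xi}(y+r) - w'_{\hat z,\xi}(y)}{h|z|}\right)^2 dr\, dy.
\]
Plugging this into the slicing identity and rewriting $w'_{\hat z,\xi}(y+r) - w'_{\hat z,\xi}(y) = [\nabla u(\xi+(y+r)\hat z) - \nabla u(\xi+y\hat z)]\cdot\hat z$, Fubini's theorem lets me reassemble the integrals over $\hat z^\perp$ and $\R$ as a single integral over $\Rd$ via $x = \xi + y\hat z$:
\[
\E_h(u) \geq \frac{\gamma}{4}\int_{\Rd}\int_{\Rd}\int_{-h|z|}^{h|z|} K(z)|z|^2 J_{h|z|}(r)\left(\frac{[\nabla u(x+r\hat z) - \nabla u(x)]\cdot\hat z}{h|z|}\right)^2 dr\, dx\, dz.
\]

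The last step is a two-stage change of variables. Setting $r = s h|z|$ with $s\in[-1,1]$ rescales the inner integral, since $J_{h|z|}(r)\, dr = J(s)\, ds$ and $r\hat z = s h z$; the powers of $h|z|$ cancel and the integrand becomes $K(z) J(s) ([\nabla u(x+shz) - \nabla u(x)]\cdot \hat z)^2 / h^2$. Then substituting $w = sz$ in the $z$-variable (with Jacobian $|s|^{-d}$) turns the $K(z)$ factor into $K(w/|s|)$ thanks to the evenness of $K$, while $shz$ becomes $hw$; the sign ambiguity $\hat z = \mathrm{sgn}(s)\hat w$ is harmless because the scalar product appears squared. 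Integrating first in $s$ by Fubini produces precisely
\[
\int_{-1}^1 J(s)\, |s|^{-d} K(w/|s|)\, ds = \int_{-1}^1 J(s)\, K_{|s|}(w)\, ds = \tilde K(w),
\]
which yields \eqref{eq:lbound}. The main obstacle I anticipate is the bookkeeping in this last double substitution: one must track carefully the cancellation of all powers of $h$ and $|z|$, and exploit the evenness of $K$ together with the squared scalar product to absorb the sign of $s$.
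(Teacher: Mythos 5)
Your proposal is correct and follows essentially the same route as the paper: slicing via Lemma \ref{stm:slicing}, the one-dimensional bound \eqref{eq:1D-lbound2} on each line, and a rescaling in the $(r,z)$ variables that assembles the effective kernel $\tilde K$. The only cosmetic difference is that the paper performs the substitution $z\mapsto |r|z$ inside the sliced representation, splitting $r\in[-1,0]$ and $r\in[0,1]$ and absorbing the sign through the identity $w'_{-\hat z,\xi}(-s)=-w'_{\hat z,\xi}(s)$, whereas you first reassemble the integral over $\Rd$ and absorb the sign through the evenness of $K$ and the squared scalar product — the two bookkeeping schemes are equivalent.
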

	\begin{proof}
		Thanks to Lemma \ref{stm:slicing}, we can reduce to the $1$-dimensional case, 
		and we take advantage of the lower bound provided by Lemma \ref{stm:1D-enbound}.
		Keeping the notation of Lemma \ref{stm:slicing}, we find
			\[\begin{split}
				\E_h(u)\geq &\frac{\gamma}{4} \int_{\Rd}\int_{z^\perp}
										\int_{\R}\int_{-h\left| z \right|}^{h\left| z \right|} J_{h\left| z \right|}(r) K(z) \left| z \right|^2 
										\left(\frac{w'_{\hat z,\xi}(t+r) - w'_{\hat z,\xi}(t)}{h\left| z \right|}\right)^2
										dr dt d\H^{d-1}(\xi) dz \\
							= & \frac{\gamma}{4} \int_{\Rd}\int_{z^\perp}
										\int_{\R}\int_{-h\left| z \right|}^{h\left| z \right|} J_{h\left| z \right|}(r) K(z) 
										\left(\frac{w'_{\hat z,\xi}(t+r) - w'_{\hat z,\xi}(t)}{h}\right)^2
										drdt\H^{d-1}(\xi) dz.
			\end{split}\]
		To cast this bound in the form of \eqref{eq:lbound},
		we change variables and use Fubini's Theorem:
			\begin{multline*}
				I \coloneqq \int_{\Rd}\int_{z^\perp} \int_{\R}
					\int_{-h\left| z \right|}^{h\left| z \right|} J_{h\left| z \right|}(r) K(z) 
							\left(\frac{w'_{\hat z,\xi}(t+r) - w'_{\hat z,\xi}(t)}{h}\right)^2
					dr dt d\H^{d-1}(\xi) dz \\
				=  \int_{\Rd}\int_{z^\perp} \int_{\R} \int_{-1}^{1} J(r) K(z) 
							\left(\frac{w'_{\hat z,\xi}(t + h\left|z\right| r) - w'_{\hat z,\xi}(t)}{h}\right)^2
						dr dt d\H^{d-1}(\xi) dz \\
				=  \int_{-1}^{0} \int_{\Rd}\int_{z^\perp} \int_{\R}
							J(r) K_{-r}( z ) 
								\left(\frac{w'_{\hat z,\xi}(t - h\left|z\right|) - w'_{\hat z,\xi}(t)}{h}\right)^2
					dt d\H^{d-1}(\xi) dz dr \\
				+ 	\int_{0}^{1} \int_{\Rd}\int_{z^\perp} \int_{\R}
							J(r) K_r(z) 
								\left(\frac{w'_{\hat z,\xi}(t + h\left|z\right|) - w'_{\hat z,\xi}(t)}{h}\right)^2
						dt d\H^{d-1}(\xi) dz dr
			\end{multline*}			
		Note that
			\[\begin{split}
				\int_{-1}^{0} \int_{\Rd}\int_{z^\perp} \int_{\R}
							J(r) K_{-r}( z )
								\left(\frac{w'_{\hat z,\xi}\big( t + h\left|z\right| \big) - w'_{\hat z,\xi}( t )}{h}\right)^2
						dt d\H^{d-1}(\xi) dz dr \\
				= \int_{-1}^{0} \int_{\Rd}\int_{z^\perp} \int_{\R}
							J(r) K_{-r}( z ) 
								\left(\frac{w'_{-\hat z,\xi}\big( -(t + h\left|z\right|) \big) - w'_{-\hat z,\xi}( -t )}{h}\right)^2
						dt d\H^{d-1}(\xi) dz dr \\
				= \int_{-1}^{0} \int_{\Rd}\int_{z^\perp} \int_{\R}
							J(r) K_{-r}( z )
								\left(\frac{w'_{\hat z,\xi}\big( t + h\left|z\right| \big) - w'_{\hat z,\xi}( t )}{h}\right)^2
						dt d\H^{d-1}(\xi) dz dr,
			\end{split}\]
				\comment{ }
			because $w'_{-\hat z,\xi}(-s) = - w'_{\hat z,\xi}(s)$ for all $s\in\R$.
			Thus, we conclude that			
			\begin{multline*}
				I = 
				 \int_{\Rd} \int_{z^\perp} \int_{\R}\left(\int_{-1}^{1} J(r) K_{\left| r \right|}(z) dr\right)					
				\left(\frac{w'_{\hat z,\xi}\big(t + h\left|z\right|\big) - w'_{\hat z,\xi}(t)}{h}\right)^2
							dt d\H^{d-1}(\xi) dz
							\\
							= \int_{\Rd}\int_{\Rd}
						 \tilde K(z) \left[ \frac{\big( \nabla u(x+ h z) - \nabla u(x)\big)\cdot\hat{z}}{h} \right]^2
						dx dz,
			\end{multline*}
			which concludes the proof.
	\end{proof}
	
	Let us remind that, by assumption,
	the kernel $K$ is bounded away from $0$ in a suitable annulus.
	The next lemma shows that
	the effective kernel appearing $\tilde K$ in \eqref{eq:lbound} inherits a similar property.
	
	\begin{lemma}\label{stm:tildeK}
		Let $\tilde K\colon \Rd \to [0,+\infty)$ be as in \eqref{eq:tildeK}.
		Then,
			\begin{equation}\label{eq:summ-tildeK}
				\int_{\Rd} \tilde K(z)\left( 1 + \left| z \right|^2 \right) dz < +\infty.
			\end{equation}
		Moreover, 
		if $\sigma_d$ and $r_1$ are the constants in \eqref{eq:sigma} and \eqref{eq:Kpos}, 
		then,
			\begin{equation}\label{eq:tildeKpos}
				\mathrm{ess} \inf \left\{ \tilde K(z) : z \in B(0,\sigma_d r_1) \right\} > 0.
			\end{equation}
		\end{lemma}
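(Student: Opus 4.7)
The proof of Lemma \ref{stm:tildeK} splits into the summability claim \eqref{eq:summ-tildeK} and the uniform lower bound \eqref{eq:tildeKpos}; the first is routine, while the second requires a concrete computation that is precisely what justifies the ad hoc value of $\sigma_d$ fixed in \eqref{eq:sigma}. For \eqref{eq:summ-tildeK}, I would apply Fubini--Tonelli directly to the defining formula for $\tilde K$, and then perform the change of variables $w = z/|r|$ in the inner integral, arriving at
\begin{equation*}
\int_{\Rd} \tilde K(z)(1 + |z|^2) dz = \int_{-1}^{1} J(r) \int_{\Rd} K(w)(1 + r^2 |w|^2) dw \, dr,
\end{equation*}
which is finite by \eqref{eq:K} and the boundedness of $J$ on $[-1,1]$.

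For the lower bound \eqref{eq:tildeKpos}, I would first exploit the evenness of $K$ and $J$ to write $\tilde K(z) = 2 \int_0^1 (1-r) r^{-d} K(z/r) dr$, and then apply the substitution $t = 1/r$ to obtain the useful representation
\begin{equation*}
\tilde K(z) = 2 \int_1^{\infty} (t-1) t^{d-3} K(tz) dt.
\end{equation*}
Setting $m \coloneqq \mathrm{ess}\inf\{K(\zeta) : \zeta \in B(0, r_1) \setminus B(0, r_0)\} > 0$ by \eqref{eq:Kpos}, and observing that $tz$ belongs to the annulus exactly when $t \in (\max(1, r_0/|z|), r_1/|z|)$, I would bound
\begin{equation*}
\tilde K(z) \geq 2m \int_{\max(1, r_0/|z|)}^{r_1/|z|} (t-1) t^{d-3} dt.
\end{equation*}
With the notation $I(a) \coloneqq \int_1^a (t-1) t^{d-3} dt$, which is strictly increasing on $[1,\infty)$ because the integrand is nonnegative, the task reduces to showing that this lower bound is uniformly positive for $|z| \leq \sigma_d r_1$.

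When $|z| \in [r_0, \sigma_d r_1]$ the integration range is $(1, r_1/|z|)$ with $r_1/|z| \geq 1/\sigma_d$, and monotonicity of $I$ yields the bound $2m \, I(1/\sigma_d)$. For $d > 2$, explicit integration gives
\begin{equation*}
I(a) = \frac{(d-2) a^{d-1} - (d-1) a^{d-2} + 1}{(d-1)(d-2)},
\end{equation*}
and the identity $(d-2) \cdot (d-1)/(d-2) = d-1$ produces the sharp evaluation $I(1/\sigma_d) = 1/[(d-1)(d-2)]$, which is the precise reason for the specific form of $\sigma_d$; for $d = 2$ one has instead $I(a) = (a-1) - \ln a$, which is positive for $a > 1$. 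When $|z| < r_0$, the integration range $(r_0/|z|, r_1/|z|)$ is strictly wider than $(1, 1/\sigma_d)$ by the assumption $r_0 < \sigma_d r_1$, and in fact the corresponding lower bound diverges as $|z| \to 0$; hence the regime $|z| \in [r_0, \sigma_d r_1]$ is the binding one, and combining the two cases yields \eqref{eq:tildeKpos}. The main obstacle is the sharp algebraic evaluation $I(1/\sigma_d) = 1/[(d-1)(d-2)]$, which is what reveals why the definition of $\sigma_d$ in \eqref{eq:sigma} is exactly the right reverse-engineered choice; the rest of the argument is organizational.
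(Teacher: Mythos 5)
Your argument is the paper's argument in different coordinates: the substitution $t=s/\left|z\right|$ turns your lower bound $\tilde K(z)\ge 2m\int_{\max(1,r_0/\left|z\right|)}^{r_1/\left|z\right|}(t-1)t^{d-3}dt$ into exactly the paper's estimate \eqref{eq:est-tildeK}, and the summability part is the same Fubini computation. The issues are in how you extract a \emph{uniform} bound. First, for $d=2$ you have $\sigma_2=1$, so $1/\sigma_2=1$ and $I(1/\sigma_2)=I(1)=0$: your ``uniform bound $2m\,I(1/\sigma_d)$'' is vacuous, and the remark that $I(a)=(a-1)-\ln a>0$ for $a>1$ gives only pointwise positivity. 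Indeed $I(r_1/\left|z\right|)\to0$ as $\left|z\right|\to r_1=\sigma_2 r_1$, and for $K=\chi_{B(0,1)}$ in the plane one computes $\tilde K(z)=2\left(\left|z\right|^{-1}-1+\ln\left|z\right|\right)\to0$ at the boundary, so no positive essential infimum over all of $B(0,\sigma_2 r_1)$ can be extracted this way. (The paper's own $d=2$ argument has the same defect --- it too only proves pointwise positivity --- and what is actually used later, in Lemma \ref{stm:cpt}, is the infimum over the smaller ball $B(0,2^{-1/2}\sigma_d r_1)$, where your monotonicity argument does give a genuine uniform bound $2m\,I(\sqrt2)>0$. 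You should at least flag this.)

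Second, in the regime $\left|z\right|<r_0$ the claim that the range $(r_0/\left|z\right|,r_1/\left|z\right|)$ is ``strictly wider than $(1,1/\sigma_d)$'' is false: its left endpoint $r_0/\left|z\right|$ tends to $+\infty$, so for small $\left|z\right|$ the two intervals are disjoint, and ``the bound diverges as $\left|z\right|\to0$'' plus ``the other regime is binding'' is not a proof of uniform positivity on all of $(0,r_0)$. The fix is easy but must be said: either note that $g(s)\coloneqq\int_{r_0/s}^{r_1/s}(t-1)t^{d-3}dt$ is continuous and positive on $(0,r_0]$ and diverges at $0$, hence has positive infimum there, or --- more transparently in the paper's parametrization --- observe that $\left|z\right|^{-(d-1)}\int_{r_0}^{r_1}s^{d-2}(1-\left|z\right|/s)\,ds$ is nonincreasing in $\left|z\right|$ on $(0,r_0]$ (both factors are), so it is bounded below by its value at $\left|z\right|=r_0$. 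Finally, the ``sharp evaluation'' $I(1/\sigma_d)=1/[(d-1)(d-2)]$ is a red herring: for $d\ge3$ the positivity of $I(1/\sigma_d)$ is immediate from $1/\sigma_d>1$, so your argument never uses the specific value of $\sigma_d$ (any $\sigma<1$ would serve); that value matters only for the paper's particular algebraic verification, not for yours.
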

	
		
		\begin{proof}
		The convergence of the integral in \eqref{eq:summ-tildeK} follows easily from \eqref{eq:K}.
		Indeed, by the definition of $\tilde K$, we see that
			\[
				\int_{\Rd} \tilde K(z) dz = \int_{-1}^{1} \int_{\Rd} J(r) K_{\left| r \right|} (z) dz dr
														= \int_{\Rd} K(z) dz;
			\]
		analogously, one finds that
			\[
				\int_{\Rd} \tilde K(z) \left| z \right|^2 dz = c \int_{\Rd} K(z) \left| z \right|^2 dz,
			\]
		for some $c>0$.
		
		For what concerns \eqref{eq:tildeKpos},
		let us set $k\coloneqq \mathrm{ess} \inf \set{ K(z) : z \in B(0,r_1)\setminus B(0,r_0) }$.
		In view of \eqref{eq:Kpos}, $k>0$.
		
		We distinguish between the case $ z\in B(0,r_0) $ and the case $ z\in B(0,r_1)\setminus B(0,r_0) $.
		In the first situation, for a.e. $z\in\Rd$,
			\[\begin{split}
				\tilde K(z) \geq & 2 \int_{\frac{ \left| z \right| }{r_1}}^{\frac{ \left| z \right| }{r_0}} J(r) K_r(z) dr
								\geq  2k \int_{\frac{ \left| z \right| }{r_1}}^{\frac{ \left| z \right| }{r_0}} \frac{1}{r^d}J(r) dr \\
								= & \frac{2k}{ \left| z \right|^{d-1} }
												\int_{r_0 }^{r_1} s^{d-2} \left( 1 - \frac{\left| z \right|}{s}  \right) ds.
			\end{split}\]
		When $ z\in B(0,r_1)\setminus B(0,r_0) $, instead, similar computations get
			\[
				\tilde K(z)	\geq 2 \int_{\frac{ \left| z \right| }{r_1}}^{1} J(r) K_r(z) dr
								= \frac{2k}{ \left| z \right|^{d-1} }
										\int_{\left| z \right| }^{r_1} s^{d-2} \left( 1 - \frac{\left| z \right|}{s}  \right) ds
				\qquad \text{for a.e. } z\in\Rd,
			\]
		so that we obtain
			\begin{equation}\label{eq:est-tildeK}
				\tilde K(z) 
					\geq  \frac{2k}{\left| z \right|^{d-1} }
						\int_{\mathrm{max}(r_0,\left| z \right|) }^{r_1} s^{d-2} \left( 1 - \frac{\left| z \right|}{s}  \right) ds
					\qquad \text{for a.e. } z\in\Rd.
			\end{equation}
			
		When $d=2$, the estimate above becomes
			\[
				\tilde K(z) \geq 2k
					 \left[ \frac{r_1 - \mathrm{max}(r_0,\left| z \right|) }{\left| z \right| }
				 				-\log \left( \frac{r_1}{\mathrm{max}(r_0,\left| z \right|)}\right)
				 	\right]
				 \qquad \text{for a.e. } z\in\Rd.
			\]
		Exploiting the concavity of the logarithm, we see that
		the lower bound that we have obtained is strictly positive
		if $ \left| z \right| < r_1 = \sigma_2 r_1$.
			
		On the other hand,
		putting $M\coloneqq \mathrm{max}(r_0,\left| z \right|)$ for shortness,
		if $d\geq 3$, the right-hand side in \eqref{eq:est-tildeK} equals
			\[
				\frac{2k}{(d-1)(d-2) \left| z \right|^{d-1}}
				\left[ 
					(d-2) \left( r_1^{d-1} - M^{d-1} \right)
					- 
					(d-1)\left| z \right|\left(r_1^{d-2} - M^{d-2}\right)
				\right],
			\]
		and therefore
			\begin{equation*} \label{eq:tildeK>0}\begin{split}
				\tilde K(z) \geq & \frac{2k M^{d-2}}{(d-1)(d-2)\left| z \right|^{d-1}} \\
							& \cdot \left\{
								\left( \frac{r_1}{M}\right)^{d-2}
									\left[ (d-2)r_1 - (d - 1)\left| z \right| \right]
									- 								 
									\left[ (d-2)M - (d - 1)\left| z \right| \right]
							\right\} 
				\end{split}\end{equation*}
		for a.e. $z\in\Rd$. When $\left| z \right| < \frac{d-2}{d-1}r_1 = \sigma_d r_1$,
		the quantity between braces is strictly positive if
			\[
				\frac{(M - \left| z \right| ) d - (2M - \left| z \right| )}{(r_1 - \left| z \right| ) d - (2r_1 - \left| z \right| )}
					< \left( \frac{r_1}{M} \right)^{d-2}.
			\]
		Observe that both the left-hand side and the right-hand one are strictly increasing in $d$;
		also, the left-hand side is bounded above by $(M - \left| z \right| ) / (r_1 - \left| z \right| )$,
		so the last inequality holds if
			\[
			 	\frac{M - \left| z \right|}{r_1 - \left| z \right|} < \frac{r_1}{M},
			\]
		which, in turn, is true for all $z\in B(0,r_1)$.
	\end{proof}

We are now in the position to prove that families with equibounded energy are compact in $H^1(\R^d)$,
and that their accumulation points admit second order weak derivatives.

	\begin{lemma}[Compactness] \label{stm:cpt}
		If $\set{u_h}\subset X$ satisfies $\E_h(u_h)\leq M $ for some $M\geq 0$,
		there exist a subsequence $\set{u_{h_\ell}}$ and a function $u\in X \cap H^2(\R^d)$
		such that $u_{h_\ell}\to u$ in $H^1(\Rd)$.
	\end{lemma}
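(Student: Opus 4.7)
The strategy parallels the 1D compactness argument of Lemma \ref{stm:1D-cpt}, with $\nabla u_h$ now playing the role played there by $u_h$. I would exploit Lemma \ref{stm:lbound} to extract from $\E_h(u_h)\le M$ uniform control on second-order differences of $u_h$, produce via mollification a smooth family $v_h$ with uniform $H^2(\Rd)$ bound, extract an $H^1$-convergent subsequence by Rellich--Kondrachov, and verify that $u_h-v_h\to 0$ in $H^1(\Rd)$ so that the same limit works for $u_h$.

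By Lemma \ref{stm:lbound},
\[
\int_{\Rd}\int_{\Rd} \tilde K(z) \left|\frac{(\nabla u_h(x+hz) - \nabla u_h(x))\cdot \hat z}{h}\right|^2 dx\, dz \le \frac{4M}{\gamma}.
\]
Fix a smooth, even, radial kernel $\rho(w)=\bar\rho(|w|)$ supported in $B(0,\sigma_d r_1)$ with $\int\rho=1$; by Lemma \ref{stm:tildeK}, $\tilde K\ge c_0>0$ on $\mathrm{supp}\,\rho$. Setting $\rho_h(w):=h^{-d}\rho(w/h)$ and $v_h:=\rho_h*u_h$, the function $v_h$ is smooth and supported in $\Omega_h:=\Omega+B(0,h\sigma_d r_1)$. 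Integration by parts in $\Delta v_h = (\nabla\rho_h)*\nabla u_h$ using $\int\nabla\rho_h=0$, followed by the rescaling $w=hz$, gives
\[
\Delta v_h(x) = \int_{\Rd}\bar\rho'(|z|)\,\frac{(\nabla u_h(x-hz)-\nabla u_h(x))\cdot\hat z}{h}\,dz,
\]
and Cauchy--Schwarz against the weight $\tilde K$ yields $\|\Delta v_h\|_{L^2}^2\le c_0^{-1}\|\bar\rho'\|_{L^2}^2\cdot 4M/\gamma$. Since $v_h\in C^\infty_c(\Rd)$, one has $\|\nabla^2 v_h\|_{L^2}=\|\Delta v_h\|_{L^2}$, and two applications of Poincaré's inequality on a fixed bounded domain containing all $\Omega_h$ (valid because $v_h$ vanishes on $\partial\Omega_h$ and each $\partial_j v_h$ has vanishing mean there) give a uniform bound on $\|v_h\|_{H^2(\Rd)}$. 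By Rellich--Kondrachov, some subsequence $v_{h_\ell}$ converges in $H^1(\Rd)$ to a limit $u\in X\cap H^2(\Rd)$.

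To upgrade this to $u_{h_\ell}\to u$ in $H^1(\Rd)$, it suffices to establish $\|u_h-v_h\|_{H^1}\to 0$, which I would obtain via a Plancherel-side splitting. The low-frequency portion $|\xi|\le\epsilon/h$ is controlled by $|1-\hat\rho(h\xi)|\le C(h|\xi|)^2$ combined with the $H^2$-bound on $v_h$; for $|\xi|\ge c/h$, rewriting the bound from Lemma \ref{stm:lbound} in Fourier variables and using Lemma \ref{stm:tildeK} together with the fact that the weighted $z$-average of $|e^{ihz\cdot\xi}-1|^2$ is bounded below for $|h\xi|\ge c$ yields $\int_{|\xi|\ge c/h}|\widehat{\nabla u_h}|^2\,d\xi \le Ch^2$, from which both a uniform $L^2$ bound on $\nabla u_h$ and the convergence $\|\nabla u_h - \nabla v_h\|_{L^2}\to 0$ follow; the $L^2$ closeness $\|u_h - v_h\|_{L^2}\to 0$ then follows from the standard mollification estimate. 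The main obstacle is making this Plancherel step rigorous---in particular, disentangling the nonseparable weight $\tilde K(z)\,|\widehat{\nabla u_h}(\xi)\cdot\hat z|^2$ to show that, for each direction of $\widehat{\nabla u_h}(\xi)$, the weighted $z$-average of $|e^{ihz\cdot\xi}-1|^2/h^2$ is uniformly bounded below for $|h\xi|$ sufficiently large.
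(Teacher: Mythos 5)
Your construction of the mollified family $v_h$, the bound $\lVert \Delta v_h\rVert_{L^2}^2\leq C M/\gamma$ obtained by pairing $\nabla\rho_h$ against the directional increments controlled by Lemma \ref{stm:lbound}, the identity $\lVert\nabla^2 v_h\rVert_{L^2}=\lVert\Delta v_h\rVert_{L^2}$, and the Rellich--Kondrachov extraction are exactly the paper's first half. The divergence is in the closeness estimate $\lVert\nabla u_h-\nabla v_h\rVert_{L^2}\to 0$, which is the genuinely delicate part of the lemma: the energy only controls the projection of $\nabla u_h(x+hz)-\nabla u_h(x)$ onto $\hat z$, so the full increment is not directly available. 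The paper handles this entirely in physical space: using that $\rho$ is radial it rewrites $\nabla v_h-\nabla u_h$ in terms of the symmetric second difference $p_h(x,y)=\nabla u_h(x+y)+\nabla u_h(x-y)-2\nabla u_h(x)$, splits $|p_h|^2$ into the component along $\hat y$ plus an integral of components along $\eta\in\hat y^\perp$ via an auxiliary kernel $\pi$, and then decomposes $p_h(x,y)\cdot\eta$ into four increments each projected along its own displacement direction ($\eta+y$, $\eta-y$, $y$, $\eta$), which after the coarea formula are all dominated by the $\tilde K$-weighted quantity in \eqref{eq:lbound}. Your Fourier route avoids all of this combinatorics, but as written it stops at precisely the point where the work has to be done: you flag the ``nonseparable weight'' $\tilde K(z)\,|\widehat{\nabla u_h}(\xi)\cdot\hat z|^2$ as an unresolved obstacle, so the proposal has a gap at its key step.

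That gap is, however, closable, and more easily than you fear: since $\widehat{\nabla u_h}(\xi)=i\xi\,\widehat{u_h}(\xi)$ is always parallel to $\xi$, the weight separates as $|\xi\cdot\hat z|^2\,|\widehat{u_h}(\xi)|^2$, and Plancherel turns \eqref{eq:lbound} into $\int m_h(\xi)\,|\widehat{u_h}(\xi)|^2\,d\xi\leq 4M/\gamma$ with
\[
m_h(\xi)=\frac{|\xi|^2}{h^2}\int_{\Rd}\tilde K(z)\,\bigl|e^{ihz\cdot\xi}-1\bigr|^2\,|\hat\xi\cdot\hat z|^2\,dz .
\]
By \eqref{eq:tildeKpos}, $\tilde K\geq\tilde k>0$ on $B(0,\sigma_d r_1)$, and for $|h\xi|\geq c$ the average of $4\sin^2(h\,z\cdot\xi/2)\,|\hat\xi\cdot\hat z|^2$ over that ball is bounded below uniformly in the direction $\hat\xi$ and in $|h\xi|\geq c$ (the integrand vanishes only on a null set, and the average stabilizes near $1/2$ times the angular factor as $|h\xi|\to\infty$). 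Hence $m_h(\xi)\geq c_1|\xi|^2/h^2$ on $\{|h\xi|\geq c\}$, which gives your high-frequency bound $\int_{|\xi|\geq c/h}|\xi|^2|\widehat{u_h}|^2\leq C h^2$; together with $|1-\hat\rho(h\xi)|\leq C(h|\xi|)^2$ and $\hat\rho(h\xi)\geq 1/2$ on $\{|h\xi|\leq c\}$ (so that low frequencies of $\nabla u_h$ are controlled by those of $\nabla v_h$, whose $H^1$ norm is uniformly bounded), this completes the estimate. If you spell this out, your argument is a correct and arguably shorter alternative to the paper's second-difference decomposition; as submitted, it is incomplete at the step you yourself identify.
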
	

	\begin{proof}
          \comment {
                      }
		Let $\tilde k \coloneqq \mathrm{ess} \inf \set{ \tilde K(z) : z \in B(0,\sigma_d r_1) }$;
		Lemma \ref{stm:tildeK} ensures that $\tilde k >0$.
		We consider a function $\rho \in C^\infty_c([0,+\infty))$ such that
			\[
				\rho(r) = 0 \qquad \text{if } r \in \left[ \frac{\sigma_d r_1}{\sqrt{2}},+\infty \right),
			\]
		and we further require that
			\[
				0\leq \rho(r) \leq \tilde k
					\quad\text{and}\quad
				\left|\rho'(r)\right| \leq \tilde k.
			\]
		
		For $h>0$ and $y\in \Rd$, we set 
			\[
				\rho_h(y)\coloneqq \frac{1}{ch^d}\rho\left(\frac{\left| y \right|}{h}\right),
					\qquad \text{with } c\coloneqq \int_{\Rd} \rho(\left| y \right|)dy,
			\]
		and we introduce the functions $v_h\coloneqq \rho_{h} \ast u_h$, as before.
		
		Each function $v_h$ is a smooth function and, for all $\tilde h\in(0,1)$,
		its support is contained in
		\[
			\Omega_{\tilde h} \coloneqq \set{ x : \dist(x,\Omega)\leq 2^{-1/2}\tilde h \sigma_d r_1}
		\]
		if $h\in(0,\tilde h)$.
		In particular, we can choose $\tilde h$ so small that $\partial \Omega_{\tilde h}$ is still Lipschitz.
		For such an $\tilde h$, we assert that
		the family $\set{ v_h }_{h\in(0,\tilde h)}$ is relatively compact in $H^1_0(\Omega_{\tilde h})$.
		In order to prove this, we first remark that
			\begin{equation}\label{eq:hess-lap}
				\int_{\Omega_{\tilde h}} \left| \nabla^2 v_h \right|^2
					= \int_{\Omega_{\tilde h}} \left| \Delta v_h \right|^2,
			\end{equation}
		and next we show that the right-hand side is uniformly bounded. 
		
		We observe that $\int_{\Rd} \nabla \rho_{h}(y)dy = 0$ for all $h>0$,
		because $\rho$ is compactly supported. Hence,
			\[
			\begin{split}
				\lVert \Delta v_h \rVert^2_{ L^2( \Omega_{\tilde h} ) }
								= & \int_{\Rd} \left| \Delta v_h \right|^2 \\
								= &	\int_{\Rd} \left| \int_{\Rd}
												\nabla \rho_{h}(y) \cdot \big( \nabla u_h(x+y) -  \nabla u_h(x) \big)
										dy \right|^2 dx \\
								\leq & \int_{\Rd} \left[\frac{1}{c h^{d+1}}\int_{\Rd}
										 \left| \rho' \left( \frac{\left| y \right|}{h} \right) \right|
											\left|\big( \nabla u_h(x+y) -  \nabla u_h(x) \big) \cdot \hat{y} \right| 
									dy \right] ^2 dx.
			\end{split}\]
		By our choice of $\rho$ and \eqref{eq:tildeKpos}, we find
			\[\begin{split}
				\lVert \Delta v_h \rVert^2_{ L^2( \Omega_{\tilde h} ) }
					\leq & \int_{\Rd} \left[ \frac{1}{c h} \int_{\Rd}
										\tilde K_h ( y )
											\left| \big( \nabla u_h( x+y) -  \nabla u_h(x) \big) \cdot \hat{y} \right|
									dy \right] ^2 dx \\
					\leq & \int_{\Rd} \left[ \frac{1}{c h} \int_{\Rd}
										\tilde K( z )
											\left| \big( \nabla u_h( x + hz ) -  \nabla u_h(x) \big) \cdot \hat{z} \right|
									dz \right] ^2 dx		
			\end{split}\]
		Further, since $\tilde K\in L^1(\Rd)$, Jensen's inequality and Fubini's Theorem yield
			\[
				\lVert \Delta v_h \rVert^2_{ L^2( \Omega_{\tilde h} ) }
					\leq \frac{\lVert \tilde K \rVert_{L^1(\Rd)}}{c^2}
									\int_{\Rd} \int_{\Rd} \tilde K(z)
											\left[ \frac{\big( \nabla u_h( x+ hz) -  \nabla u_h(x) \big) \cdot \hat{z}}{h} \right]^2
										dx dz.
			\]				
		The lower bound \eqref{eq:lbound} entails
			\[
				\lVert \Delta v_h \rVert^2_{ L^2( \Omega_{\tilde h} ) }
						\leq \frac{4}{c^2 \gamma} \lVert \tilde K \rVert_{L^1(\Rd)} \E_h(u_h),
			\]
		so that, in view of the assumption $\E_h(u_h)\leq M$ and of \eqref{eq:hess-lap}, we get
			\begin{equation}\label{eq:nabla2}
				\lVert \nabla^2 v_h \rVert^2_{ L^2( \Omega_{\tilde h} ) }
						\leq \frac{4 M}{c^2 \gamma} \lVert \tilde K \rVert_{L^1(\Rd)} .
			\end{equation}
		
		We argue as in the proof of Lemma \ref{stm:1D-cpt}.
		We recall that,  for $h\in(0,\tilde h)$,
		each $v_h$ vanishes on the complement of $\Omega_{\tilde h}$,
		and thus, by Poincar\'e inequality,
		\eqref{eq:nabla2} implies a uniform bound on the norms
		$\lVert v_h \rVert_{H^2_0(\Omega_{\tilde h})}$.
		As a consequence, by Rellich-Kondrachov Theorem,
		the family $\set{\tilde v_h}_{h\in(0,\tilde h)}$
		of the restrictions of the functions $v_h$ to $\Omega_{\tilde h}$
		admits a subsequence $\set{\tilde v_{h_\ell}}$ that converges in $H^1_0(\Omega_{\tilde h})$
		to a function $\tilde u \in H^2_0(\Omega_{\tilde h})$.
		Actually, the support of $\tilde u$ is contained in $\bar\Omega$, and,
		if we put,	
			\[
				u(x)\coloneqq\begin{cases}
							\tilde u(x) 	&\text{if } x\in \bar\Omega, \\
							0					& \text{otherwise},
						\end{cases}
			\]
		we infer that $\set{v_{h_\ell}}$ converges in $H^1(\Rd)$ to $u \in X\cap H^2(\Rd)$.
		
		To accomplish the proof, it suffices to show that
		the $L^2$ distance between $\nabla u_h$ and $\nabla v_h$ vanishes when $h\searrow 0$.
	Since $\rho_h$ has unit $L^1(\Rd)$-norm and is radial, we have
		\[\begin{split}
			\int_{\Rd} \lvert \nabla v_h(x) - & \nabla u_h(x)\rvert^2 dx 
				= \int_{\Rd} \left| \int_{\Rd} 
									\rho_{h}(y) \big( \nabla u_h(x+y) -  \nabla u_h(x) \big)dy
								\right|^2 dx \\
				= & \frac{1}{4}\int_{\Rd} \left| \int_{\Rd}
									\rho_{h}(y)  \big( \nabla u_h(x+y) + \nabla u_h(x-y) -  2\nabla u_h(x) \big)dy
								\right|^2 dx \\
				\leq & \frac{1}{4}\int_{\Rd} \int_{\Rd}
								\rho_{h}(y) \left| \nabla u_h(x+y) + \nabla u_h(x-y) -  2\nabla u_h(x) \right|^2 dy  dx.
		\end{split}\]
	We remark that for any fixed $y\in\Rd\setminus\set{0}$ and for all $p\in\Rd$,
	the identity
	$ \left| p \right|^2 = \left| p\cdot y \right|^2 + \left| (\mathrm{Id} - y\otimes y) p \right|^2 $
	can be reformulated as
		\begin{equation}\label{eq:proj}\begin{split}
			\left| p \right|^2 	= & \left| p\cdot y \right|^2
													+ \int_{\hat{y}^\perp}
															\pi( \left| \eta \right| )\left| p\cdot \eta\right|^2 d\H^{d-1}(\eta) \\
										= &  \left| p\cdot y \right|^2
													+ \frac{1}{h^2} \int_{\hat{y}^\perp}
															\pi_h(\eta)\left| p\cdot \eta\right|^2 	d\H^{d-1}(\eta),
		\end{split}\end{equation}
	where $\pi\colon [0,+\infty) \to [0,+\infty)$ is a continuous function such that
		\[
			\int_{e_d^\perp} \pi( \left| \eta \right| ) \left| \eta \right|^2 d\H^{d-1}(\eta) = 1,
		\]
	and $\pi_h(\eta)\coloneqq h^{-d+1} \pi( \left| \eta \right| / h)$.
	We further prescribe that
		\begin{equation*}
			\pi(r) = 0 \qquad \text{if } r\in \left[ \frac{\sigma_d r_1}{\sqrt{2}}, +\infty \right)
		\end{equation*}
	and that $\lim_{r\searrow 0} \pi(r) / r \in \R$.
	
	We apply the formula \eqref{eq:proj}
	to $p_h(x,y) \coloneqq \nabla u_h(x+y) + \nabla u_h(x-y) -  2\nabla u_h(x)$
	and we find that
		\begin{equation}\label{eq:nablas}
			\int_{\Rd} \left| \nabla v_h(x) - \nabla u_h(x)\right|^2 dx  \leq \frac{1}{4}\left( I_1 + I_2 \right),
		\end{equation}
	where
		\[
		\begin{gathered}
			 I_1 \coloneqq \int_{\Rd} \int_{\Rd} 
			 									\rho_{h}(y) \left| y \right|^2 
			 										\left| p_h(x,y) \cdot \hat{y} \right|^2 dy  dx, \\
			I_2 \coloneqq \frac{1}{h^2}\int_{\Rd} \int_{\Rd} \int_{\hat{y}^\perp} \rho_{h}(y) \pi_h(\eta)
													\left| p_h(x,y)\cdot \eta \right|^2 d\H^{d-1}(\eta) dy  dx .
		\end{gathered}
		\]
	We first consider $I_1$. 
	Keeping in mind that $\rho$ is compactly supported and
	$\rho(\left| y \right|) \leq \tilde k \leq \tilde K(y)$ for a.e. $y\in B(0, 2^{-1/2}\sigma_d r_1)$,
	we get
		\[
		\begin{split}
			I_1 \leq  \frac{(\sigma_d r_1)^2}{c} 
							& \left[
									\int_{\Rd} \int_{\Rd}
										\tilde K_{h}(y) \left| \big( \nabla u_h(x+y) - \nabla u_h(x) \big) \cdot \hat{y} \right|^2 dy dx
							\right. \\
						& \left. + \int_{\Rd} \int_{\Rd}
										\tilde K_{h}(y) \left| \big( \nabla u_h(x-y)  - \nabla u_h(x) \big)\cdot \hat{y} \right|^2 dy  dx
							\right],
		\end{split}
		\]
	and, by \eqref{eq:lbound},
		\begin{equation}\label{eq:I1}
			I_1 \leq \frac{ 8 (\sigma_d r_1)^2 M}{c \gamma} h^2.
		\end{equation}
			
	As for $I_2$, we assert that there exist a constant $L>0$,
	depending on $d$, $\sigma_d$, $r_1$, $\tilde k$, and $c$, such that
		\begin{equation}\label{eq:I2}
		I_2 \leq \frac{ L M}{\gamma} h^2.
		\end{equation}
	To prove the claim, we write the integrand appearing in $I_2$ as follows:
		\[
		\begin{split}
			p_h(x,y)\cdot \eta = & \big( \nabla u_h(x+y) + \nabla u_h(x-y) -  2\nabla u_h(x) \big)\cdot \eta \\
				= & \big( \nabla u_h(x+y) + \nabla u_h(x-y) - 2 \nabla u_h(x-\eta) \big)\cdot \eta \\
					& + 2 \big( \nabla u_h(x-\eta) - \nabla u_h(x) \big)\cdot \eta \\
				= & \big( \nabla u_h(x+y) - \nabla u_h(x-\eta) \big)\cdot ( \eta + y ) \\
					& + \big( \nabla u_h(x-y) - \nabla u_h(x-\eta) \big)\cdot ( \eta - y ) \\
					& - \big( \nabla u_h(x+y) - \nabla u_h(x-y) \big)\cdot y
					+ 2 \big( \nabla u_h(x-\eta) - \nabla u_h(x) \big)\cdot \eta .
		\end{split}
		\]
	
	We plug this identity in the definition of $I_2$ and we find that
		\[
		\begin{split}
			I_2 \leq & \frac{4}{c} \int_{\Rd} \int_{\Rd} \int_{\hat{y}^\perp} \rho(\left| y \right|) \pi( \left| \eta \right| )
												\left| 
													\big(\nabla u_h(x+hy) - \nabla u_h(x-h\eta) \big)\cdot ( \eta + y )
												\right|^2 d\H^{d-1}(\eta) dy  dx  \\
						& + \frac{4}{c}  \int_{\Rd} \int_{\Rd} \int_{\hat{y}^\perp} \rho(\left| y \right|) \pi(\left| \eta \right|)
												\left|
													\big(\nabla u_h(x-hy) - \nabla u_h(x-h\eta) \big)\cdot ( \eta - y )
												\right|^2 d\H^{d-1}(\eta) dy  dx \\
						& + \frac{8}{c}  \lVert \pi \rVert_{L^1(e_d^\perp)}
								\int_{\Rd} \int_{\Rd} \rho(\left| y \right|) \left| y \right|^2
												\left|
													\big(\nabla u_h(x+hy) - \nabla u_h(x) \big)\cdot \hat y
												\right|^2 dy  dx \\
						& + \frac{16}{c}  \int_{\Rd} \int_{\Rd} \int_{\hat{y}^\perp}
												\rho(\left| y \right|) \pi( \left| \eta \right| )
												\left|
													\big(\nabla u_h(x+h\eta) - \nabla u_h(x) \big)\cdot \eta
												\right|^2 d\H^{d-1}(\eta) dy dx.
		\end{split}
		\]
	We estimate separately each of the contributions on the right-hand side.
	
	Let us set
	$\mathbb{S}^{d-1}_+ \coloneqq \set{ e \in \mathbb{S}^{d-1} : e \cdot e_d > 0}$ and
	$\mathbb{S}^{d-1}_- \coloneqq \set{ e \in \mathbb{S}^{d-1} : e \cdot e_d < 0}$.
	Hereafter, we denote by $L$ any strictly positive constant
	depending only on $d$, $\sigma_d$, $r_1$, and on the norms of $\rho$ and $\pi$.
	
	Taking advantage of the Coarea Formula,
	we rewrite the first addendum as follows:
		\begin{multline*}
			\int_{\Rd} \int_{\Rd} \int_{\hat{y}^\perp} \rho(\left| y \right|) \pi( \left| \eta \right|) \left| 
										\big(\nabla u_h(x+hy) - \nabla u_h(x-h\eta) \big)\cdot ( \eta + y )
									\right|^2 d\H^{d-1}(\eta) dy  dx \\
				= \int_{\Rd}\int_{\Rd} \int_{\hat{y}^\perp} \rho( \left| y \right|) \pi( \left| \eta \right|) \left|
										\big( \nabla u_h(x+h(\eta+y)) - \nabla u_h(x) \big)\cdot ( \eta + y )
									\right|^2 d\H^{d-1}(\eta) dx  dy \\
				= \int_{\mathbb{S}^{d-1}_+} \int_{\Rd} \int_{\R} \int_{e^\perp}
													r^{d-1}\rho( r ) \pi( \left| \eta \right| )
														\left|
															\big( \nabla u_h(x+h(\eta + r e)) - \nabla u_h(x) \big)\cdot ( \eta + r e )
														\right|^2 d\H^{d-1}(\eta) dr dx  d\H^{d-1}(e) \\
				= \int_{\mathbb{S}^{d-1}_+} \int_{\Rd} \int_{\Rd}
														\left| y \right|^2 \left| y\cdot e \right|^{d-1}\rho( \left| y\cdot e \right| )
														\pi\big( \left| (\mathrm{Id} - e\otimes e) y \right| \big) 
														\left| \big( \nabla u_h(x+hy) - \nabla u_h(x) \big)\cdot \hat y \right|^2
													dy dx  d\H^{d-1}(e).
		\end{multline*}
	Similarly, we have
		\begin{multline*}
			\int_{\Rd} \int_{\Rd} \int_{\hat{y}^\perp} \rho(\left| y \right|) \pi( \left| \eta \right|)
							\left|
								\big(\nabla u_h(x-hy) - \nabla u_h(x-h\eta) \big)\cdot ( \eta - y )
							\right|^2 d\H^{d-1}(\eta) dy  dx
					 \\ = \int_{\mathbb{S}^{d-1}_-} \int_{\Rd} \int_{\Rd}
							\left| y \right|^2 \left| y\cdot e \right|^{d-1}\rho( \left| y\cdot e \right| )
									\pi\big( \left|(\mathrm{Id} - e\otimes e) y \right| \big)
									\left| \big( \nabla u_h(x+hy) - \nabla u_h(x) \big)\cdot \hat y \right|^2
							dy dx  d\H^{d-1}(e),
		\end{multline*}
	and thus
		\[\begin{split}
			\int_{\Rd} & \int_{\Rd} \int_{\hat{y}^\perp} \rho(\left| y \right|) \pi( \left| \eta \right|)
							\left| 
								\big(\nabla u_h(x+hy) - \nabla u_h(x-h\eta) \big)\cdot ( \eta + y )
							\right|^2 d\H^{d-1}(\eta) dy  dx \\
				& + \int_{\Rd} \int_{\Rd} \int_{ \hat{y}^\perp } \rho(\left| y \right|) \pi( \left| \eta \right|)
							\left|
								\big(\nabla u_h(x-hy) - \nabla u_h(x-h\eta) \big)\cdot ( \eta - y )
							\right|^2 d\H^{d-1}(\eta) dy  dx \\
				= & \int_{ \mathbb{S}^{d-1}} \int_{\Rd} \int_{\Rd}
								\left| y\cdot e \right|^{d-1} \left| y \right|^2 \rho( \left| y\cdot e \right| )
									\pi\big( \left|(\mathrm{Id} - e\otimes e) y \right|\big)
									 \left| \big( \nabla u_h(x+hy) - \nabla u_h(x) \big)\cdot \hat y \right|^2
							dy dx d\H^{d-1}(e).
				\end{split}\]
			Let us recall that $\rho(r) = \eta (r) = 0$ if $r \notin [0,2^{-1/2}\sigma_d r_1)$,
			whence, for any $e\in \mathbb{S}^{d-1}$,
			the product $\rho( \left| y\cdot e \right| )\pi\big( \left|(\mathrm{Id} - e\otimes e) y \right|\big)$
			vanishes outside the cylinder
				\[
					C_e \coloneqq \set{ y\in\Rd : 
										\left| y\cdot e \right|, \left|(\mathrm{Id} - e\otimes e) y \right| 
											\in [0,2^{-1/2}\sigma_d r_1)
										} \subset B(0,\sigma_d r_1).
				\]
			We therefore see that the last multiple integral equals
				\begin{multline*}
					\int_{ \mathbb{S}^{d-1}} \int_{\Rd} \int_{C_e}
								\left| y\cdot e \right|^{d-1} \left| y \right|^2 \rho( \left| y\cdot e \right| )
								\pi\big( \left|(\mathrm{Id} - e\otimes e) y \right|\big)
								\left| \big( \nabla u_h(x+hy) - \nabla u_h(x) \big)\cdot \hat y \right|^2
							dy dx d\H^{d-1}(e) 
					\\ \leq L \int_{ \mathbb{S}^{d-1}} \int_{\Rd} \int_{C_e}
								\tilde K(y) \left| \big( \nabla u_h(x+hy) - \nabla u_h(x) \big)\cdot \hat y \right|^2
							dy dx d\H^{d-1}(e)
					\\ \leq \frac{L M}{\gamma} h^2 .
				\end{multline*}
		We then obtain
		\begin{multline}\label{eq:1}
			\frac{4}{c} \int_{\Rd} \int_{\Rd} \int_{\hat{y}^\perp} \rho(\left| y \right|) \pi( \left| \eta \right| )
						\left| 
							\big(\nabla u_h(x+hy) - \nabla u_h(x-h\eta) \big)\cdot ( \eta + y )
						\right|^2 d\H^{d-1}(\eta) dy  dx  \\
			+ \frac{4}{c}  \int_{\Rd} \int_{\Rd} \int_{\hat{y}^\perp} \rho(\left| y \right|) \pi(\left| \eta \right|)
						\left|
							\big(\nabla u_h(x-hy) - \nabla u_h(x-h\eta) \big)\cdot ( \eta - y )
						\right|^2 d\H^{d-1}(\eta) dy  dx \\
			\leq  \frac{ LM }{ \gamma} h^2.
		\end{multline}
		Next, we have
			\begin{align}
				 \frac{8}{c}  \lVert \pi \rVert_{L^1(e_d^\perp)}
						\int_{\Rd} \int_{\Rd} \rho(\left| y \right|) \left| y \right|^2
								\left|	\big(\nabla u_h(x+hy) - \nabla u_h(x) \big)\cdot \hat y \right|^2
							dy  dx 
					& \leq \frac{L M}{\gamma} h^2, \label{eq:2} \\
				\frac{16}{c}  \int_{\Rd} \int_{\Rd} \int_{\hat{y}^\perp}
							\rho(\left| y \right|) \pi( \left| \eta \right| )
								\left| \big(\nabla u_h(x+h\eta) - \nabla u_h(x) \big)\cdot \eta \right|^2
						d\H^{d-1}(\eta) dy dx
					& \leq \frac{L M}{\gamma} h^2. \label{eq:3}			
			\end{align}
		The bound in \eqref{eq:2} may be deduced as the one in \eqref{eq:I1},
		so, to establish \eqref{eq:I2}, we are only left to prove \eqref{eq:3}.
		To this aim, let $\psi\in C^\infty_c(\Rd\times\Rd)$ be a test function.
		By a standard argument and Fubini's Theorem we have that
			\[\begin{split}
				\int_{\Rd} \int_{\hat{y}^\perp}
					& \rho(\left| y \right|) \pi( \left| \eta \right| ) \psi(y,\eta)  d\H^{d-1}(\eta) dy
				\\ = & \lim_{\epsilon\searrow 0} 
					\int_{\Rd}\int_{\Rd} \frac{\left| y \right|}{2\eps} \chi_{\set{ t < \eps}}(\left| \eta \cdot y\right|)
						\rho(\left| y \right|) \pi( \left| \eta \right| ) \psi(y,\eta)  d\eta dy
				\\ = & \lim_{\epsilon\searrow 0}
					\int_{\Rd} \frac{\pi( \left| \eta \right| )}{\left| \eta \right| }
					\left(\int_{\Rd} \frac{\left| \eta \right| }{2\eps} \chi_{\set{ t < \eps}}(\left| \eta \cdot y\right|)
						\rho(\left| y \right|) \left| y \right|  \psi(y,\eta)  dy
					\right) d\eta
				\\ = &  \int_{\Rd} \int_{\hat{\eta}^\perp}
								\frac{\pi( \left| \eta \right| )}{\left| \eta \right| }
								\rho(\left| y \right|) \left| y \right| \psi(y,\eta) 
							d\H^{d-1}(y) d\eta
			\end{split}\]
		(recall that we assume $\lim_{r\searrow 0} \pi(r) / r$ to be finite).
		It follows that
			\[\begin{split}
				\int_{\Rd} \int_{\Rd} & \int_{\hat{y}^\perp}
						\rho(\left| y \right|) \pi( \left| \eta \right| )
						\left| \big(\nabla u_h(x+h\eta) - \nabla u_h(x) \big)\cdot \eta \right|^2
					d\H^{d-1}(\eta) dy dx
				\\ = & \int_{\Rd} \int_{\Rd} \int_{\hat{\eta}^\perp}
						\frac{\pi( \left| \eta \right| )}{\left| \eta \right| }
						\rho(\left| y \right|) \left| y \right| 
						\left| \big(\nabla u_h(x+h\eta) - \nabla u_h(x) \big)\cdot \eta \right|^2
					d\H^{d-1}(y) d\eta dx
				\\ \leq & L \int_{\Rd} \int_{\Rd}
									\tilde K(\eta)\left| \big(\nabla u_h(x+h\eta) - \nabla u_h(x) \big)\cdot \eta \right|^2
								d\eta dx.
			\end{split}\] 
		In view of the bound on the energy, we retrieve \eqref{eq:3}.
		
		The proof is now concluded,
		because from \eqref{eq:nablas}, \eqref{eq:I1}, and \eqref{eq:I2}
		we obtain
			\[
				\int_{\Rd} \left| \nabla v_h(x) - \nabla u_h(x)\right|^2 dx  \leq \frac{L M}{\gamma} h^2,
			\]
		as desired.
	\end{proof}

\begin{rmk}\label{remcrit}
	The choice $u_h=u$ in Lemma \ref{stm:cpt} provides a criterion for a function in $H^1(\Rd)$
	to belong to $H^2(\R^d)$. Namely, when $\Omega$, $K$, and $f$ fulfil the assumptions of the current section
	and $f''$ is bounded,
	a function $u\in X$ is in $H^2(\R^d)$
	if and only if $\E_h(u)\le M$ for some $M>0$ and for all $h$'s small enough.
	One implication is a byproduct of Lemma \ref{stm:cpt},
	while the other follows
	by exploiting the slicing formula and Remark \ref{rmk:f''bounded}:
	indeed, if $f''\leq c$ one finds
		\[
			\E_h(u) \leq
				\frac{c}{2} \left(\int_{\Rd} K(z) \left| z \right|^2 dz\right)
					\int_{\Rd} \left| \nabla^2 u(x)\right|^2 dx.
		\]
\end{rmk}

We can now accomplish the proof of Theorem \ref{stm:Gconv}.

\begin{proof}[Proof of Theorem \ref{stm:Gconv}]
Lemma \ref{stm:cpt} provides the compactness result
of statement \eqref{stm:cptgen} in Theorem \ref{stm:Gconv}.

Turning to the lower limit inequality, 
for any $u\in X$ and for any family $\set{u_h} \subset X$ that converges to $u$ in $H^1(\Rd)$,
we may focus on the situation
when there exists $M\geq 0$ such that $\E_h(u_h) \leq M$ for all $h>0$.
In view of Lemma \ref{stm:cpt},
we have that $u\in H^2(\R^d)$,
thus statement \eqref{stm:Gliminf} follows by
Proposition \ref{stm:intermediate}.

For what concerns the upper limit inequality,
we reason as in the $1$-dimensional case
(see the proof of Proposition \ref{stm:1D-pointlim}).
In order to adapt the argument,
we observe that,
if $u\in X \cap H^2(\Rd)$, by mollification,
we can construct a sequence $\set{u_\ell} \subset X$ of smooth functions
that tend to $u$ in $H^2(\Rd)$
and satisfy $\lim_{\ell \nearrow +\infty} \E_0(u_\ell) = \E_0(u)$,
provided that $f''$ is bounded
or $u\in X \cap H^2(\Rd) \cap W^{1,\infty}(\Rd)$.
Indeed, when one of these assumptions holds,
there exists $c>0$ such that
$f''(\left|\nabla u_\ell(x) \cdot \hat{z}\right|)\leq c$ for a.e. $x$ and all $z$,
and Lebesgue's Theorem applies.
Then, we can establish the upper limit inequality
by combining the approximation by smooth functions
and Proposition \ref{stm:intermediate}.
\end{proof}

We conclude with a couple of remarks.

\begin{rmk}\label{remscala2}
	As in Remark \ref{remscala1},
	we see that	the $\Gamma$-limit of 
	\[
	h \E_h(u) =  \frac{\F_0(u)-\F_h(u)}{h}
	\]
	in $H^1(\Rd)$ is $0$.
	The same $\Gamma$-limit is found
	if one considers the $L^2(\R^d)$-topology on $X$,
	because $h\E_h(u)\geq 0$ for all $u\in X$ and Proposition \ref{stm:intermediate} provides a constant recovery sequence for smooth functions.
\end{rmk}	

\begin{rmk}
Statements \eqref{stm:Gliminf}, \ref{stm:Glimsup-a}, and \ref{stm:Glimsup-b} in Theorem \ref{stm:Gconv},
that is, the $\Gamma$-convergence result, 
are not affected if we replace $X$ with $H^1(\R^d)$; the proof remains essentially the same.
On the other hand,
if we substitute $\Omega$ with $\R^d$, 
the compactness provided by statement \eqref{stm:cptgen} of Theorem \ref{stm:Gconv} may fail.
\end{rmk}


\begin{bibdiv}
\begin{biblist}[\normalsize]
		\bib{AB}{article}{
			author={Alberti, G.},
			author={Bellettini, G.},
			title={A non-local anisotropic model for phase transitions: asymptotic behaviour of rescaled energies},
			year={1998},
			journal={European Journal of Applied Mathematics},
			volume={9},
			number={3},
			pages={261--284}
		}
	
	\bib{ADM}{article}{
		author = {Ambrosio, L.},
		author = {De Philippis, G.},
		author = {Martinazzi, L.},
		title={Gamma-convergence of nonlocal perimeter functionals},
		year={2011},
		journal={Manuscripta Mathematica},
		volume={134},
		number={3},
		pages={377--403}
	}

	\bib{AK}{article}{
		author={Aubert, G.},
		author={Kornprobst, P.},
		title={Can the nonlocal characterization of Sobolev spaces by Bourgain et al. be useful for solving variational problems?},
		journal={SIAM J. Numer. Anal.},
		volume={47},
		pages={844–-860},
		year={2009}
	}
	
\bib{BP}{article}{
			title={On the asymptotic behaviour of nonlocal perimeters},
			author={Berendsen, J.},
			author={Pagliari, V.},
			journal={Preprint, to appear on ESAIM: COCV},
}

\bib{BBM}{article}{
   author={Bourgain, J.},
   author={Brezis, H.},
   author={Mironescu, P.},
   title={Another look at Sobolev spaces},
   conference={
      title={Optimal control and partial differential equations},
   },
   book={
      publisher={IOS, Amsterdam},
   },
   date={2001},
   pages={439--455},
}
		
\bib{Braides}{book}{,
    AUTHOR = {Braides, A.},
     TITLE = {{$\Gamma$}-convergence for beginners},
    SERIES = {Oxford Lecture Series in Mathematics and its Applications},
    VOLUME = {22},
 PUBLISHER = {Oxford University Press, Oxford},
      YEAR = {2002},
     PAGES = {xii+218},
      ISBN = {0-19-850784-4},
   MRCLASS = {49-02 (35B27 49J45 49K40 74Q05)},
  MRNUMBER = {1968440},
MRREVIEWER = {Ilaria Fragal\`a},
       DOI = {10.1093/acprof:oso/9780198507840.001.0001},
       URL = {https://doi.org/10.1093/acprof:oso/9780198507840.001.0001},
}
\bib{BT08}{article}{,
  AUTHOR = {Braides, A.},
  author = {Truskinovsky, L.},
     TITLE = {Asymptotic expansions by {$\Gamma$}-convergence},
   JOURNAL = {Contin. Mech. Thermodyn.},
  FJOURNAL = {Continuum Mechanics and Thermodynamics},
    VOLUME = {20},
      YEAR = {2008},
    NUMBER = {1},
     PAGES = {21--62},
      ISSN = {0935-1175},
   MRCLASS = {49J45 (74G10 74Q05 74R10)},
  MRNUMBER = {2398821},
MRREVIEWER = {Filippo Santambrogio},
       DOI = {10.1007/s00161-008-0072-2},
       URL = {https://doi.org/10.1007/s00161-008-0072-2},
}

\bib{DalMaso}{book}{,
    AUTHOR = {Dal Maso, G.},
     TITLE = {An introduction to {$\Gamma$}-convergence},
    SERIES = {Progress in Nonlinear Differential Equations and their
              Applications},
    VOLUME = {8},
 PUBLISHER = {Birkh\"{a}user Boston, Inc., Boston, MA},
      YEAR = {1993},
     PAGES = {xiv+340},
      ISBN = {0-8176-3679-X},
   MRCLASS = {49-02 (46N10 47H99 47N10 49J45 73B27)},
  MRNUMBER = {1201152},
MRREVIEWER = {T. Zolezzi},
       DOI = {10.1007/978-1-4612-0327-8},
       URL = {https://doi.org/10.1007/978-1-4612-0327-8},
}		         

\bib{Da}{article}{
	author={D{\'a}vila, J.},
	title={On an open question about functions of bounded variation},
	journal={Calculus of Variations and Partial Differential Equations},
	year={2002},
	volume={15},
	number={4},
	pages={519--527}
}

\bib{G}{article}{
   author={Gobbino, M.},
   title={Finite difference approximation of the Mumford-Shah functional},
   journal={Comm. Pure Appl. Math.},
   volume={51},
   date={1998},
   number={2},
   pages={197--228},
   issn={0010-3640},
}
	
\bib{GM}{article}{
			title={Finite-difference approximation of free-discontinuity problems},
			author={Gobbino, M.},
			author={Mora, M.},
			journal={Proceedings of the Royal Society of Edinburgh Section A: Mathematics},
			volume={131},
			number={3},
			pages={567--595},
			year={2001},
}

\bib{LS}{article}{
	title={Characterization of Sobolev and BV spaces},
	author={Leoni, G.},
	author={Spector, D.},
	journal={J. Funct. Anal.},
	number={261},
	pages={2926 -- 2958},
	year={2011}
}

\bib{MaS}{article}{
	author={Maz'ya, V.},
	author={Shaposhnikova, T.},
	title={On the Bourgain, Brezis, and Mironescu theorem concerning limiting embeddings of fractional Sobolev spaces},
	journal={J. Funct. Anal.},
	volume={195},
	year={2002},
	pages={230--238}
}

\bib{MS}{article}{
			title={A nonlocal isoperimetric problem with dipolar repulsion},
			author={Muratov, C.B.},
			author={Simon, T.M.},
			journal={Preprint, to appear on Commun. Math. Phys.},
}

\bib{PR}{article}{,
    AUTHOR = {Peletier, M.A.},
    AUTHOR = {Planqu\'{e}, R.},
    AUTHOR = {R\"{o}ger, M.},
     TITLE = {Sobolev regularity via the convergence rate of convolutions
              and {J}ensen's inequality},
   JOURNAL = {Ann. Sc. Norm. Super. Pisa Cl. Sci. (5)},
  FJOURNAL = {Annali della Scuola Normale Superiore di Pisa. Classe di
              Scienze. Serie V},
    VOLUME = {6},
      YEAR = {2007},
    NUMBER = {4},
     PAGES = {499--510},
      ISSN = {0391-173X},
   MRCLASS = {46E35 (49J40 49J45)},
  MRNUMBER = {2394408},
MRREVIEWER = {Agnieszka Ka\l amajska},
}
	
\bib{P}{article}{,
		author={Ponce, A.C.}
		title={A new approach to Sobolev spaces and connections to $\Gamma$-convergence},
		journal={Calc. Var. Partial Differential Equations},
		year={2004},
		volume={19},
		number={3},
		pages={229--255}
}
		
\end{biblist}
\end{bibdiv}
\end{document}